\newif\ifuseboldmathops
\newif\ifuseittextabbrevs
	\newcommand{\cf}{{\it cf.}}
	\newcommand{\eg}{{\it e.g.}}
	\newcommand{\ie}{{\it i.e.}}
	\newcommand{\etc}{{\it etc.}}
	\newcommand{\cf}{cf.}
	\newcommand{\eg}{e.g.}
	\newcommand{\ie}{i.e.}
	\newcommand{\etc}{etc.}
	\newcommand{\reals}{\mathbf{R}}
	\newcommand{\reals}{\mathbb{R}}
\newcommand{\eqbydef}{\mathrel{\stackrel{\Delta}{=}}}
\newcommand{\ones}{\mathbf 1}
\font\tensc=cmcsc10
\font\sevensc=cmcsc10 at 7pt
\font\fivesc=cmcsc10 at 5pt
\def\mathsc#1{{\mathchoice
  {\hbox{\tensc#1}}
  {\hbox{\tensc#1}}
  {\hbox{\sevensc#1}}
  {\hbox{\fivesc#1}}
}}
\def\ttbox#1{\mbox{\tt #1}}
\newcommand{\sat}{\mathop{\rm sat}}
\newcommand{\dz}{\mathop{\rm dz}}
\newcommand{\nn}{\mbox{\sc nn}}
\newcommand{\amn}{\varphi}
\newcommand{\Var}{\mathop{\rm Var}}
\newcommand{\Input}{\mathop{\rm In}}
\newcommand{\Output}{\mathop{\rm Out}}
\newcommand{\Graph}[1]{\llbracket #1 \rrbracket}
\newcommand{\smt}{\mbox{\sc smt}}
\newcommand{\mip}{\mbox{\sc mip}}
\newcommand{\card}{\mathop{\bf card}}
\newcommand{\amnet}{\textsc{Amnet}}
\theoremstyle{remark}
\newtheorem{example}{Example}
\theoremstyle{definition}
\newtheorem{definition}{Definition}
\theoremstyle{plain}
\newtheorem{theorem}{Theorem}
\acrodef{lmi}[LMI]{Linear Matrix Inequality}
\acrodef{mn}[$\mu$N]{$\mu$-network}
\acrodef{amn}[AMN]{affine multiplexing network}
\acrodef{iten}[ITEN]{if-then-else network}
\acrodef{aiten}[AITEN]{affine if-then-else network}
\acrodef{smt}[SMT]{Satisfiability Modulo Theory}
\acrodef{mip}[MIP]{Mixed Integer Programming}
\title{\bf Affine Multiplexing Networks:\\
		System Analysis, Learning, and Computation}
\author{Ivan Papusha \quad Ufuk Topcu \quad Steven Carr \quad Niklas Lauffer}
\date{April 30, 2018}
\begin{document}
\maketitle

\begin{abstract}
We introduce a novel architecture and computational framework for formal,
automated analysis of systems with a broad set of nonlinearities in the
feedback loop, such as neural networks, vision controllers, switched systems,
and even simple programs.  We call this computational structure an \emph{affine
multiplexing network} (AMN).  The architecture is based on interconnections of
two basic conceptual building
%blocks: affine transformations ($\alpha$), and multiplexers ($\mu$).
blocks: multiplexers ($\mu$), and affine transformations ($\alpha$). 
When attached together appropriately, these building blocks translate
to conjunctions and disjunctions of affine statements, resulting in an encoding
of the network into satisfiability modulo theory (SMT), mixed integer
programming, and sequential convex optimization solvers.

We show how to formulate and verify system properties like stability and
robustness, how to compute margins, and how to verify performance through a
sequence of SMT queries. As illustration, we use the framework to verify
%a nonlinear controller that overcomes fundamental limitations of linear
%controllers, 
closed loop, possibly nonlinear dynamical systems that contain neural networks
in the loop, and hint at a number of extensions that can make AMNs a potent
playground for interfacing between machine learning, control, convex and
nonconvex optimization, and formal methods. 
%Our open-source package~\amnet\ implements these ideas.
\end{abstract}

%%%%%%%%%%%%%%%%%%%%%%%%%%%%%%%%%%%%%%%%%%%%%%%%%%%%%%%%%%%%%%%%%%%%%%%%%%%%%%%%
\section{Introduction}\label{sec:introduction}

%% %%%%%%%%%%%%%%%%%%%%%%%%%%%%%%%%%%%%%%%%%%%%%%%%%%%%%%%%%%%%%%%%%%%%%%%%%%%%%%
\subsection{Affine multiplexing networks}
This work proposes a novel computational structure called an
\acfi{amn}, or an \emph{affine if-then-else network}, which is formed by the
composition of multiplexing functions and affine transformations in a dimension
compatible way, and parameterized by the weights and biases of the affine
transformations.  

By repeatedly instantiating and connecting these two components in an acyclic
computation graph, we can construct arbitrary relations between the
entire network's inputs and outputs.  The result of such an interconnection is
an artificial neural network with interspersed multiplexing
nonlinearities. Models built with the \acs{amn} viewpoint are much more
powerful in practice for certain applications than general neural network models, because
as we will see, many mathematical properties of AMNs can be formally and
automatically verified with existing tools. Meanwhile, AMNs can be trained just
as easily as neural networks.

The \acs{amn} model encompasses many other neural networks as a special case,
including deep multilayer feedforward networks with
piecewise linear nonlinearities, \eg,~rectified linear unit (ReLU), absolute
value, saturation, deadzone, and max-pooling nonlinearities. In particular,
\acsp{amn} can be readily applied to quantifying 
resilience in classifiers~\cite{Cheng:2017}, in software
safety verification~\cite{Huang:2017,Ehlers:2017}, and in
certification~\cite{Katz:2017}.

\paragraph{Building blocks}
Define the \emph{multiplexing} function $\mu:\reals^n \times \reals^n \times
\reals \to \reals^n$, 
\begin{equation}
	\label{eq:mu}
	\mu(x,y,z) \eqbydef \left\{
		\begin{array}{ll}
			x, & \text{if}~z\leq 0,\\
			y, & \text{otherwise}.
		\end{array}
	\right.
\end{equation}
The function $\mu$ represents a ternary \emph{choice} assignment, similar to the
operation
\[
	w := \mu(x,y,z) 
	\quad\Longleftrightarrow\quad
	w := \ttbox{if}\; z\leq 0\; \ttbox{then}\; x\; \ttbox{else}\; y.
\]
This multiplexing function is the first building block of an AMN, and 
can be visualized as a $2$-to-$1$ multiplexing unit, shown
in the left pane of
Figure~\ref{fig:basicblocks}. The value of $\mu(x,y,z)$ is either
$x$ or $y$, depending on whether the statement $z \leq 0$ is true ($1$)
or false ($0$).  Motivated by electronic component nomenclature, we refer
to $x$ and $y$ as the \emph{signals} or \emph{inputs}, and $z$ as the
\emph{select} or \emph{enable} input. If the value of $z$ satisfies the
one-dimensional linear \emph{enable condition} ($z\leq 0$), then the
\emph{output} $w$ obtains the value $x$; otherwise ($z > 0$) the output $w$
obtains the value~$y$.

The second building block is an \emph{affine transformation}, which is any function,
$\alpha : \reals^n \to \reals^m$, with a vector input and output, of the form
\begin{equation}
	\label{eq:aff}
	\alpha(x) \eqbydef Ax + b, 
	\quad A \in \reals^{m\times n},
	\quad b\in\reals^m.
\end{equation}
An affine transformation is visualized as an amplifier, shown in the right
pane of Figure~\ref{fig:basicblocks}, and parameterized by a \emph{gain} or
\emph{weight} matrix $A$ and a \emph{bias} vector~$b$.

\begin{figure}[hbtp]
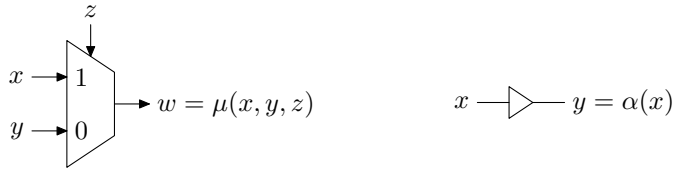

	\centering
    %\begin{subfigure}[b]{0.45\linewidth}
    \begin{subfigure}{0.4\linewidth}
		\centering
        \includegraphics{fig/mp/mux-1}
		%\caption{$2$-to-$1$ multiplexing unit with inputs $x$ and $y$, select
		%input $z$, enable condition $z\leq 0$, and output $w=\mu(x,y,z)$.}
		\label{fig:mux1}
    \end{subfigure}
    \quad
    %\begin{subfigure}[b]{0.45\linewidth}
    \begin{subfigure}{0.4\linewidth}
		\centering
        \includegraphics[trim=0 0 0 -12pt]{fig/mp/aff-1}
		%\caption{affine transformation unit with input $x$ and output
		%$y=\alpha(x)$}
        \label{fig:aff1}
    \end{subfigure}
	\caption{Basic building blocks: multiplexer ($\mu$) and affine
	transformation ($\alpha$).}
    \label{fig:basicblocks}
	\vspace{-5ex}
\end{figure}

\paragraph{Motivation}
An \acl{amn}'s motivating capabilities are ultimately realized by its encoding in linear
arithmetic.  This encoding allows one to ask and answer quantitative questions
about the network function, or any other function when expressed as an
\acs{amn}, using \ac{smt} or \ac{mip} solvers. 

This formal encoding feature of \acsp{amn} makes them particularly attractive
for the analysis and verification of \emph{control systems} with nonlinear
components in the loop, including switched systems, classifiers, 
%decision trees, 
and neural networks.
In the first part of this paper, \S\ref{sec:introduction}, we formally define
and explore examples of \acsp{amn}, and explain how to encode them using
\acs{smt} and \ac{mip}. 
Ultimately motivated by the modeling capabilities of \acsp{amn} in the loop
with control systems, we describe a powerful counterexample-guided
computational procedure to search for Lyapunov functions, which we will
describe in \S\ref{sec:inloop}. We give several extended examples in
\S\ref{sec:examples}, and conclude with a number of directions for future work
in \S\ref{sec:extensions}.

%%%%%%%%%%%%%%%%%%%%%%%%%%%%%%%%%%%%%%%%%%%%%%%%%%%%%%%%%%%%%%%%%%%%%%%%%%%%%%%%
\subsection{Formal definition}
\begin{definition}[Affine Multiplexing Network]
An \emph{\acl{amn}} is a real vector function
$\amn:\reals^q\to\reals^p$ that can be expressed recursively as
\begin{equation}
	\label{eq:amn_bnf}
	%\amn ::= c \mid x \mid A\cdot \amn + b \mid \mu(\amn_1,\amn_3,\amn_3),
	%\amn ::= c \mid x \mid A\cdot \amn + b \mid \mu(\amn,\amn,\amn),
	%\amn ::= c \mid x \mid \alpha(\varphi) \mid \mu(\amn,\amn,\amn),
	%\amn ::= c \mid x \mid \alpha(\amn_1) \mid \mu(\amn_1,\amn_2,\amn_3),
	%\amn ::= x \mid \alpha(\amn_1) \mid \mu(\amn_1,\amn_2,\amn_3),
	%\amn ::= \id \mid \alpha(\amn_1) \mid \mu(\amn_1,\amn_2,\amn_3),
	\amn(x) ::= x \mid \alpha(\amn_1(x)) \mid \mu(\amn_1(x),\amn_2(x),\amn_3(x)),
\end{equation}
where $x$ is the (vector) input variable,
$\mu$ is the multiplexing function~\eqref{eq:mu}, 
$\alpha$ is any affine transformation of the form~\eqref{eq:aff}, 
and $\amn_1,\ldots,\amn_3$ are any other
\aclp{amn} with compatible dimensions.
\end{definition}

We distinguish between the network function $\amn$ (or $\amn[x]$, where the
input variable $x$ has not been bound to any particular value), and its
evaluation at an input, $\amn(a)=\amn[x:=a]$.  For a given assignment to the
input variable $x$, each terminal expression in~\eqref{eq:amn_bnf} evaluates to
a vector of appropriate dimension. The entire network $\amn$ is parameterized
by the
weights and biases of all its affine expressions, which we lump into a
single $r$-dimensional vector $\theta\in\reals^r$. When important, we write
$\amn_\theta(x)$ to stress that the function $\amn$ is parameterized by
$\theta$.

%An affine transformation is a function $\alpha : \reals^{n_1}\to\reals^{n_2}$,
%$\alpha(x)=Ax + b$, parameterized by a
%real weight matrix $A\in\reals^{n_2\times n_1}$ and bias vector
%$b\in\reals^{n_2}$. The dimensions $n_1$ and $n_2$ can be arbitrary and
The input and output dimensions $q$ and $p$ can be arbitrary and
different for different \acs{amn} instances, as long
as the full recursive expression~\eqref{eq:amn_bnf} makes sense. Constants are
affine transformations independent of the input, \eg, $\alpha(x)=c$.  
As seen in the next section, many common functions can be rewritten in AMN form.

%%%%%%%%%%%%%%%%%%%%%%%%%%%%%%%%%%%%%%%%%%%%%%%%%%%%%%%%%%%%%%%%%%%%%%%%%%%%%%%%
\subsection{Examples}
\begin{example}[Maximum]\label{ex:max}
The function $\max:\reals^2\to\reals$ ($q=2$, $p=1$) that computes the maximum
of two numbers can be expressed as an \acs{amn}.
Let $x=(x_1,x_2)\in\reals^2$ be the input variable, and define
affine transformations $\alpha_i:\reals^2\to\reals$, $i=1,\ldots,3$, by
\[
	\alpha_1(x_1,x_2) = x_1, \quad
	\alpha_2(x_1,x_2) = x_2, \quad
	\text{ and } \quad
	\alpha_3(x_1,x_2) = -x_1+x_2.
\]
By composing $\alpha_1$, $\alpha_2$, and $\alpha_3$ with a multiplexer, we can
define the network
\begin{equation}
	\label{eq:amn_max}
	\amn^\mathrm{max}(x_1,x_2)=
	\mu(\alpha_1(x_1,x_2),\alpha_2(x_1,x_2),\alpha_3(x_1,x_2)).
\end{equation}
It follows that $\amn^\mathrm{max}(x_1,x_2)$ evaluates to $\max(x_1,x_2)$,
for all $x\in\reals^2$. 
We often suppress the affine transformations $\alpha_i$ for notational
convenience, and record the expression~\eqref{eq:amn_max} directly as
$\amn^\mathrm{max}(x_1,x_2)=\mu(x_1,x_2,-x_1+x_2)$, see Figure~\ref{fig:max1}.
\qed
\end{example}

\begin{example}[Rectification]\label{ex:relu}
A common activation nonlinearity in neural networks is the rectifier (also
known as the ramp, or a rectified linear unit (ReLU)) function
$r:\reals\to\reals$ ($q=p=1$), where
$r(x)=\max(x,0)$.
Using Example~\ref{ex:max} with $x_2=0$ (constant) gives
the \acs{amn} 
%$\amn^r(x)=\amn^\mathrm{max}(x,0)=\mu(x,0,-x)$, 
$\amn^r(x)=\mu(x,0,-x)$, 
see Figure~\ref{fig:relu1}.
\qed
\end{example}

\begin{example}[Saturation]\label{ex:sat}
The saturation function $\sat:\reals\to\reals$ ($q=p=1$), defined as
\[
	\sat(x) = \left\{
	\begin{array}{ll}
		-1, & \text{if } x \leq -1,\\
		x, & \text{if } -1 < x < 1,\\
		1, & \text{otherwise},
	\end{array}
	\right.
\]
can be written as
$\amn^\mathrm{sat}(x)=\mu(1,\mu(-1,x,x+1),-x+1)$, see Figure~\ref{fig:sat1}.
\qed
\end{example}

\begin{figure}[htpb]
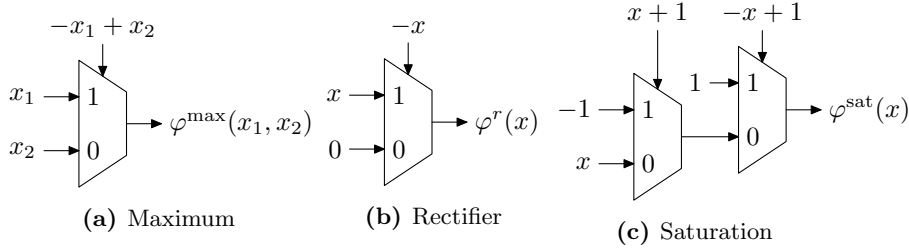

	\centering
    %\begin{subfigure}[b]{0.45\linewidth}
	\hspace{-3em}
    \begin{subfigure}{0.33\linewidth}
		\centering
        \includegraphics[scale=1]{fig/mp/max-1}
		\caption{Maximum}
		\label{fig:max1}
    \end{subfigure}
    \begin{subfigure}{0.25\linewidth}
		\centering
        \includegraphics[scale=1]{fig/mp/relu-1}
		\caption{Rectifier}
        \label{fig:relu1}
    \end{subfigure}
    \begin{subfigure}{0.3\linewidth}
		\centering
        \includegraphics[scale=1]{fig/mp/sat-1}
		\caption{Saturation}
        \label{fig:sat1}
    \end{subfigure}
	\caption{Selected piecewise affine functions.}
    \label{fig:impl_pwl}
\end{figure}

\begin{example}[Smooth activations]
Functions that are not piecewise affine, like $f(x)=x^2$ (square), $f(x) =
(1+e^{-x})^{-1}$ (sigmoid), and $f(x)=\tanh(x)$, cannot be represented exactly
as an~\acs{amn}. They can, however, be approximated arbitrarily well by
an~\acs{amn} on any compact interval.
\qed
\end{example}

We now introduce a specific network as a running
example to demonstrate encoding and training of \acsp{amn}.

\begin{example}[Triplexer]\label{ex:triplex}
The \acs{amn} illustrated in Figure~\ref{fig:triplex1} is meant to resemble a
classical single-input/single-output, two-layer, feedforward networks.  It uses four
multiplexers, arranged in a feedforward topology, with two affine layers and
two nonlinear layers.

This triplexer~\acs{amn} 
$\amn^\mathsc{tri}_\theta : \reals\to\reals$ ($q=p=1$)
can be used to approximate a real-valued function.  It
is parameterized by the~24 weights and biases $\theta = (a_1,b_1,\ldots,f_4)
\in \reals^{24}$ making up 12 affine transformations:

\begin{align*}
	\text{First layer weights:}
	\quad
	&\left\{
		\begin{array}{l}
			x_1 := a_1x + b_1\\
			y_1 := c_1x + d_1\\
			z_1 := e_1x + f_1
		\end{array}
	\right.
	\quad\cdots\quad
	%\;\;\cdots\;\;
	\left\{
		\begin{array}{l}
			x_3 := a_3x + b_3\\
			y_3 := c_3x + d_3\\
			z_3 := e_3x + f_3
		\end{array}
	\right.\\
	\text{First nonlinearity:}
	\quad
	&\left\{
		\begin{array}{l}
			w_1 := \mu(x_1, y_1, z_1)\\
			w_2 := \mu(x_2, y_2, z_2)\\
			w_3 := \mu(x_3, y_3, z_3)
		\end{array}
	\right.\\
	\text{Second layer weights:}
	\quad
	&\left\{
		\begin{array}{l}
			x_4 := a_4w_2 + b_4\\
			y_4 := c_4w_3 + d_4\\
			z_4 := e_4w_1 + f_4
		\end{array}
	\right.\\
	\text{Second nonlinearity:}
	\quad
	&\left\{
		\begin{array}{l}
			\vspace{-1ex}\\
			y := \mu(x_4,y_4,z_4)\\
			\vspace{-1ex}
		\end{array}
	\right.
	\qed
\end{align*}
\end{example}

\begin{figure}[htpb]
	\centering
	\includegraphics{fig/mp/triplex-1}
	\caption{The ``triplexer," a 2-layer 4-mux \acl{amn}.}
	\label{fig:triplex1}
\end{figure}

\paragraph{Discontinuous functions}
Following the previous examples, it is possible to express any
continuous, piecewise affine function exactly as an \acs{amn}. 
%If the function is continuous almost everywhere, the reproduction is exact
%almost everywhere.  
%However, our definition of the multiplexing function $\mu$ also allows us
%to express discontinuous functions as \aclp{amn}, making these networks more
%powerful than continuous NNs for modeling switched and hybrid systems.
However, because the multiplexing function $\mu$ is effectively an if-then-else
statement, we can express many discontinuous functions as \acsp{amn} as well,
making these networks strictly more powerful for modeling
switched and hybrid systems than neural networks with continuous nonlinearities. 
The next example illustrates the powerful
modeling capability of \acsp{amn} in dynamical systems.

%XXX \cite{Magnani:2009}

\begin{example}[Switched system]\label{ex:switched}
The dynamical system
with state dependent switching
\begin{equation}
	\label{eq:switched_lin}
	x(t+1) = \left\{
	\begin{array}{ll}
		A^- x(t), & \text{if } x_1(t) \leq 0,\\
		A^+ x(t), & \text{otherwise},
	\end{array}
	\right.
	\quad t = 0,1,2\ldots,
\end{equation}
where $A^-,A^+\in\reals^{n\times n}$ are given dynamics matrices, and
$x(t)\in\reals^n$ is the state at time $t$,
is equivalent to
the dynamical system
$x(t+1) = \amn^\mathrm{sw}(x(t))$. The state transition function
$\amn^\mathrm{sw}:\reals^n\to\reals^n$ 
is defined by the \acl{amn}
\[
	\amn^\mathrm{sw}(x) = \mu(A^- x, A^+ x, e_1^Tx),
\]
where $e_1 = (1,0,\ldots,0)$ is the standard basis vector in $\reals^n$.
\qed
\end{example}

Note that the (nonlinear) state transition function $\amn^\mathrm{sw}$ in
Example~\ref{ex:switched} need not be continuous on the switching boundary
$\{0\}\times \reals^{n-1}$; this discontinuity is in general impossible to
model exactly using a neural network with continuous (\eg, sigmoid, ReLU)
nonlinearities, but poses no difficulty for the \acs{amn} model, because the
multiplexing function $\mu$ (eq.~\eqref{eq:mu}) by design can be discontinuous
in its first two arguments.

\paragraph{Useful~\acsp{amn}}
Some common functions and their implementations appear in
Table~\ref{tab:impl_pwl}. Note that a valid~\acs{amn} need not necessarily be
convex, differentiable, or even continuous, as in the case of the cardinality
function~$\card(x)$. However an~\acs{amn} must ultimately be expressible as a
composition of multiplexers and affine transformations.
\begin{table}[hbtp]
	\centering
	\begin{tabular}{l|l|l}
		\textbf{Name} & \textbf{Function} & \textbf{AMN Expression}\\
		\hline
		maximum & $\max(x,y)$ & $\mu(x,y,-x+y)$\\
		minimum & $\min(x,y)$ & $\mu(y,x,-x+y)$\\
		rectification & $r(x)=\max(x,0)$ & $\mu(x,0,-x)$\\
		abs. value & $|x|$ & $\mu(-x,x,x)$\\
		saturation & $\sat(x)$ & $\mu(1,\mu(-1,x,x+1),-x+1)$\\
		deadzone & $\dz(x)$ & $\mu(x+1,\mu(x-1,0,x+1),-x+1)$\\
		%\hline
		%$\ell_\infty$-norm & $\|x\|_\infty$ & $\max(\max(x_1,x_2),\max(x_3,x_4))\ldots$\\
		$\|x\|_\infty$ & $\max(|x_1|,\max(|x_2|,\ldots))$ & $\mu(|x_1|,\mu(|x_2|\ldots),-|x_1|+\mu(|x_2|\ldots))$\\
		$\|x\|_1$ & $|x_1| + \cdots + |x_n|$ & $\sum_{i=1}^{n} \mu(-x_i, x_i, x_i)$\\
		$\card(x)$ & $|\{1\leq i\leq n \mid x_i \neq 0\}|$ & $\sum_{i=1}^{n}\mu(\mu(1,0,x_i),0,-x_i)$
	\end{tabular}
	\caption{Implementation of common functions as \aclp{amn}.}
	\label{tab:impl_pwl}
\end{table}

Note that by definition of an \acs{amn}, the enable condition for any
multiplexer is $z\leq 0$.  By composing multiplexers appropriately, it is
possible for the enable condition to have another real comparison,
\eg, $\leq, <, \geq, >, =, \neq$. For example, the enable condition $z \geq 0$
can be emulated by negating the enable input of a multiplexing unit, $\mu(x, y,
-z)$.  See Table~\ref{tab:impl_gates} for an \acs{amn} implementation of
allowable comparisons comparisons.

Multiple comparisons can be composed by gating operations (AND,
OR, XOR, \etc). For example, the \acs{amn} $\amn^\wedge(x,y,z_1,z_2)$
corresponds to the AND operation,
\[
	\amn^\wedge(x,y,z_1,z_2)
	=
	\left\{
		\begin{array}{ll}
			x, & \text{if}~z_1\leq 0 \text{ and}~z_2\leq 0,\\
			y, & \text{otherwise}.
		\end{array}
	\right.
\]
Refer to Table~\ref{tab:impl_gates} and Figure~\ref{fig:impl_gates} for a
visualization of a selected subset of these.

%universal approximation, examples of how to encode ReLU, abs value, saturation,
%deadzone; two types of composition: type 1: composing $\mu$s to get piecewise
%linearities, type 2: composing $\mu$s to get AND, OR, NOT gates

\begin{table}[htbp]
	\centering
	\begin{tabular}{r|l|l}
		\textbf{Gate} & \textbf{Definition} & \textbf{Expression}\\
		\hline
		AND & $\amn^\wedge(x,y,z_1,z_2)$ & $\mu(\mu(x,y,z_1),y,z_2)$\\
		OR & $\amn^\vee(x,y,z_1,z_2)$ & $\mu(x,\mu(x,y,z_1),z_2)$\\
		NOT & $\amn^\neg(x,y,z)$ & $\mu(y, x, z)$\\
		XOR & $\amn^{\oplus}(x,y,z_1,z_2)$ & $\mu(\mu(y,x,z_1),\mu(x,y,z_1),z_2)$ \\
		\hline
		LE & $\amn^\leq(x,y,z)$ & $\mu(x,y,z)$\\
		GE & $\amn^\geq(x,y,z)$ & $\mu(x,y,-z)$\\
		LT & $\amn^<(x,y,z)$ & $\amn^\neg(x,y,-z)$\\
		GT & $\amn^>(x,y,z)$ & $\amn^\neg(x,y,z)$\\
		EQ & $\amn^=(x,y,z)$ & $\amn^\wedge(x,y,z,-z)$\\
		NEQ & $\amn^{\neq}(x,y,z)$ & $\amn^\wedge(y,x,z,-z)$
	\end{tabular}
	\caption{Implementation of various gates/comparisons as AMNs.}
	\label{tab:impl_gates}
\end{table}

\begin{figure}[htpb]
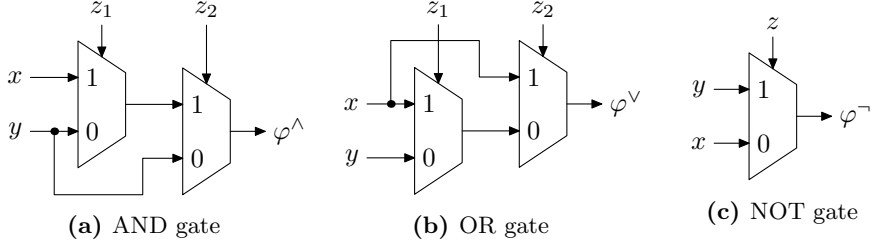

	\centering
    %\begin{subfigure}[b]{0.45\linewidth}
    \begin{subfigure}{0.3\linewidth}
		\centering
        \includegraphics[scale=1]{fig/mp/and-1}
		\caption{AND gate}
		\label{fig:and1}
    \end{subfigure}
	~\quad~
    \begin{subfigure}{0.3\linewidth}
		\centering
        \includegraphics[scale=1]{fig/mp/or-1}
		\caption{OR gate}
        \label{fig:or1}
    \end{subfigure}
	~
    \begin{subfigure}{0.3\linewidth}
		\centering
        \includegraphics[scale=1]{fig/mp/not-1}
		\caption{NOT gate}
        \label{fig:not1}
    \end{subfigure}
	\caption{Selected gate functions.}
    \label{fig:impl_gates}
\end{figure}

%%%%%%%%%%%%%%%%%%%%%%%%%%%%%%%%%%%%%%%%%%%%%%%%%%%%%%%%%%%%%%%%%%%%%%%%%%%%%%%%
\subsection{Key properties}
%\subsection{Properties of~\aclp{amn}}
\paragraph{Well-definedness}
Not all compatibly dimensioned interconnections of multiplexers and affine
transformations result in an~\acs{amn} that is a well-defined function. A key
requirement on \acsp{amn} is a lack of variable dependence cycles.
Intuitively, phenomena such as race conditions cannot be resolved in a pure
mathematical structure like an~\acs{amn}---if an output of a component feeds
back to the input, then the output may not be uniquely determined by a given
input. We use the concept of a computation graph to make this concept formal.

\begin{definition}[Variables, direct dependency, computation graph, well-definedness]
Given an \acs{amn} $\amn$, the set $\Var(\amn)$ of internal signal
\emph{variables} is defined recursively as
\[
	\Var(\amn) = \left\{\hspace{-0.5em}
	\begin{array}{ll}
		\{x\}, & \text{if }\amn(x) = x,\\
		\{\amn\} \cup \Var(\amn_1), & \text{if }\amn(x) = \alpha(\amn_1(x)),\\
		\begin{aligned}
			\{\amn\} &\cup \Var(\amn_1)
			         \cup \Var(\amn_2) \\
					 &\cup \Var(\amn_3), 
		\end{aligned}	
		& \text{if }\amn(x)=\mu(\amn_1(x),\amn_2(x),\amn_3(x)).
	\end{array}
	\right.
\]
For two variables $v_i,v_j\in\Var(\amn)$, we say that $v_j$
\emph{directly depends} on $v_i$ if 
$v_j=\alpha(v_i)$
for some $\alpha$, or 
$v_j=\mu(v_k,v_l,v_m)$, with $i\in\{k,l,m\}$.
The \emph{computation graph} of $\amn$ is a directed graph $G(\amn)=(V,E)$,
with vertices $V=\Var(\amn)$,
and an edge for every direct dependency, 
$E=\{(v_i,v_j) \in V\times V \mid v_j \text{ directly depends on } v_i\}$. 
The network $\amn$ is \emph{well-defined} if $G(\amn)$ has no directed cycles.
\end{definition}

\begin{definition}[Inputs and outputs]
Given a well-defined \acs{amn} $\amn$, and its computation graph
$G(\varphi)=(V,E)$, the sets of \emph{input} and \emph{output} variables are,
respectively
\begin{align*}
	\Input(\varphi) &= \{ v \in V \mid 
		v \text{ has no incoming edges}\},\\
	\Output(\varphi) &= \{ w \in V \mid 
		w \text{ has no outgoing edges}\}.
\end{align*}
\end{definition}

The computation graph encodes dependency relationships between the internal
variables.  The set of variables $\Var(\amn)$ contains unique names for the
outputs of the constituent units of $\amn$. 
Following the electronic systems metaphor, the variables are the
\emph{signals} or \emph{wires} in a circuit diagram like
Figures~\ref{fig:basicblocks}--\ref{fig:triplex1}; variables also correspond to
the \emph{vertices} or \emph{nodes} of $G(\amn)$. Similarly, the computational
operations $\mu$ and $\alpha$ correspond to \emph{edges} in $G(\amn)$.
The nodes of $G(\amn)$ with no incoming edges are inputs to $\amn$ (or
constants), and nodes with no outgoing edges are the outputs of $\amn$. 
For a given assignment to the inputs, there is a unique assignment to all
internal nodes and the outputs, provided the network $\amn$ is well-defined.

The computation graph $G(\amn^\mathsc{tri}_\theta)$ for the triplexer from
Example~\ref{ex:triplex} is shown in Figure~\ref{fig:trigraph1}.
It depicts a natural flow of information for computing a real output
$y=\amn^\mathsc{tri}_\theta(x)$ for a given real input $x$.
In this case, the number of variables is
$|V|=17$ with $V = \{x,z_1,x_1,y_1,\ldots,y\}$,
and
$|E|=24$ with $E = \{ (x,z_1),(x,x_1),(x,y_1),\ldots,(y_4,y) \}$. 
Since there are no directed cycles in $G(\amn^\mathsc{tri}_\theta)$, the
expression $\amn^\mathsc{tri}_\theta$ is a well-defined function from $\reals$
to $\reals$.

\begin{figure}[htpb]
	\centering
	\includegraphics{fig/mp/trigraph-1}
	\caption{Computation graph $G(\amn^\mathsc{tri}_\theta)$ for the triplexer.}
	\label{fig:trigraph1}
\end{figure}

\paragraph{Non-uniqueness}
There is usually more than one way to express a given
piecewise affine function as an \acs{amn}. For example, using the identity
$\sat(x)=r(x+1)-r(x-1)-1$, we obtain an alternative expression
\begin{align*}
	\amn^{\mathrm{sat}'}(x) 
	&= \amn^r(x+1)-\amn^r(x-1)-1\\
	&= \mu(x+1,0,-x-1)-\mu(x-1,0,-x+1)-1,
	%&= \mu(x,-1,-x-1)-\mu(x-1,0,-x+1),
\end{align*}
which is distinct from, and has a different encoding than the expression
for~$\amn^\mathrm{sat}$ in Example~\ref{ex:sat}. Nevertheless the two
expressions evaluate to the same output, \ie, $\amn^\mathrm{sat}(x) =
\amn^{\mathrm{sat}'}(x)$, for all $x\in\reals$. 
This non-uniqueness means that one implementation of a given function as an
\acs{amn} can be more efficient than another if that implementation uses fewer
multiplexers or affine transformations, or if those multiplexers and affine
transformations have a smaller dimensionality.
%We will formalize the notion of efficiency later XXX.

\paragraph{Universal approximation}
Like a classical neural network, an AMN can approximate an arbitrary nonlinear
function, but unlike classical neural networks, the AMN output is allowed to be
discontinuous. AMNs inherit the universal function approximation property from
the neural networks they embed (\cf~\cite{Cybenko:1989}).

%%%%%%%%%%%%%%%%%%%%%%%%%%%%%%%%%%%%%%%%%%%%%%%%%%%%%%%%%%%%%%%%%%%%%%%%%%%%%%%%
\subsection{Encoding}\label{sec:encoding}
By focusing on affine transformations and affine enable conditions, we
constrain the network input-output relationship to be a conjunction or
disjunction of statements over the linear real arithmetic. If the
inputs and outputs are unbound or partially bound, a satisfying
assignment can be obtained by solving a sequence of linear programs (LPs). To
see how this works, we first define a recursive procedure for converting an
\acl{amn} $\amn$ into statements over linear real arithmetic (SMT encoding), or
a set of linear constraints over real and binary variables (MIP encoding).

\paragraph{SMT encoding}
For a given AMN $\amn$ with input $x\in\reals^q$ and output
$y\in\reals^p$, the formula $\smt_\amn[x,y]$ is a first-order logic formula
given recursively as
\begin{align}
	\smt_{x}[x,y]
	&\equiv
	\left\{
		y = x
	\right\},\label{eq:smt_enc_var}
	\\
	\smt_{\alpha(\amn_1)}[x,y]
	&\equiv 
	\left\{
		\begin{aligned}
			\exists v.\, 
			&(y = A v + b) \\
			& \wedge \smt_{\amn_1}[x,v]
		\end{aligned}
	\right\},
	\text{ where }
	\alpha(\xi) = A \xi + b,\label{eq:smt_enc_aff}
	\\
	\smt_{\mu(\amn_1,\amn_2,\amn_3)}[x,y]
	&\equiv 
	\left\{
		\begin{aligned}
		\exists u, v, w.\,
			& ((w \leq 0) \rightarrow (y = u)) \\
			& \wedge (\neg(w \leq 0) \rightarrow (y = v)) \\
			%& (((w \leq 0) \wedge (y = u)) \\
			%& \vee (\neg(w \leq 0) \wedge (y = v))) \\
			%& \wedge \smt_{\amn_1}[x, u] \wedge \smt_{\amn_2}[x, v] \wedge \smt_{\amn_3}[x, w]
			& \wedge \smt_{\amn_1}[x, u] \\
			& \wedge \smt_{\amn_2}[x, v] \\
			& \wedge \smt_{\amn_3}[x, w]
		\end{aligned}
	\right\}.\label{eq:smt_enc_mu}
\end{align}

\begin{example}[Triplexer (SMT)]
After simplification and variable renaming, we obtain a linear real
arithmetic encoding of the triplexer from from Example~\ref{ex:triplex}:
\[
	\smt_{\amn_\theta^\mathsc{tri}}[x,y]
	\equiv
	\left\{
	\begin{aligned}
	\exists &(x_1,y_1,z_1,\ldots,x_4,y_4,z_4,w_1,w_2,w_3)\in\reals^{15}.\\
	& \bigwedge_{i=1}^{3} 
		(x_i = a_i x + b_i \wedge
		y_i = c_i x + d_i \wedge
		z_i = e_i x + f_i)\\
	& \wedge \bigwedge_{j=1}^{3} 
		%(w_j = \mu(x_j, y_j, z_j))\\
		((z_j \leq 0) \rightarrow (w_j=x_j))
		\wedge (\neg (z_j \leq 0) \rightarrow (w_j=y_j))\\
	& \wedge (x_4 = a_4 w_2 + b_4 \wedge
		y_4 = c_4 w_3 + d_4 \wedge
		z_4 = e_4 w_1 + f_4) \\
	& \wedge 
		%(y=\mu(x_4,y_4,z_4))
		((z_4 \leq 0) \rightarrow (y=x_4))
		\wedge (\neg (z_4 \leq 0) \rightarrow (y=y_4))
	\end{aligned}
	\right\}.
\]
Note that the only unbound variables in $\smt_{\amn_\theta^\mathsc{tri}}[x,y]$
are the input $x$ and output $y$. Furthermore, every clause is an affine
equation, inequality, or the logical negation of an affine equation or
inequality.
\qed
\end{example}

\paragraph{MIP encoding}
%Every constraint involving an affine unit, $y=Ax + b$, is affine in the
%variables $x$, $y$. For constraints involving multiplexers, $w = \mu(x,y,z)$,
%we create a binary variable $b\in\{0,1\}$ that is true ($b=1$) if and only if
%$z \leq 0$. Using the big-$M$ encoding~\cite{Grossmann:2002}, for large enough
%$M$, we have
%\[
%	w = \mu(x,y,z) \Longleftrightarrow
%	\left\{
%	\begin{aligned}
%		-Mb &< z \leq M(1-b),\\
%		-Mb &\leq w - y \leq Mb,\\
%		-M(1-b) &\leq w - x \leq M(1-b), \quad b\in\{0,1\}
%	\end{aligned}
%	\right\}.
%\]
%The mixed integer constraints have variables $x,y,w\in\reals^n$,
%$z\in\reals$, and $b\in\{0,1\}$.
Given an \acs{amn} $\amn$, and its computation graph $G(\amn)=(V,E)$, we define
a collection of mixed integer constraints, parameterized by $x\in\reals^q$ and
$y\in\reals^p$, over the variables $\Var(\amn)$ and additional binary
variables as follows:
\begin{enumerate}
	\item For $v\in\Input(\amn)$, add the constraint $x=v$;
	for $w\in\Output(\amn)$, add $y=w$.
	\item For each $(v_i, v_j) \in E$ with $v_j = \alpha(v_i)$,
	$\alpha(\xi) = A \xi + b$, add the constraint $v_j = A v_i + b$ with real
	(vector) variables $v_i$ and $v_j$.
	\item For each $(v_k,v_j), (v_l, v_j), (v_m, v_j) \in E$ with $v_j = \mu(v_k,
	v_l, v_m)$, add the mixed integer ``big-$M$" constraints
	\begin{equation}
		\label{eq:mip_enc_mu}
		\begin{aligned}
			-Mb_j &< v_m \leq M(1-b_j),\\
			-\ones Mb_j &\preceq v_j - v_l \preceq \ones Mb_j,\\
			-\ones M(1-b_j) &\preceq v_j - v_k \preceq \ones M(1-b_j), \quad b_j\in\{0,1\},
		\end{aligned}
	\end{equation}
	with real (vector) variables $v_j,v_k,v_l,v_m$ and binary variables $b_j\in\{0,1\}$.
\end{enumerate}

The formula $\mip_\amn[x,y]$ is the conjunction of the constraints
obtained through these steps.
Every affine unit in $\amn$ corresponds to an affine equality
constraint, and every multiplexer corresponds to a binary variable $b_j$ that
is true ($b_j=1$) if and only if the corresponding enable condition is met
($v_m\leq 0$). 
We use the ``big-$M$" constraints~\eqref{eq:mip_enc_mu}, which are equivalent
to the constraint $v_j = \mu(v_k, v_l, v_m)$, provided $M$ is a large enough
constant, see~\cite{Grossmann:2002}.

\subsection{Optimization}
Recall that equality constraints like $y=h(x)$, with variables $x$ and $y$, can
be efficiently imposed in linear programs, and in general, in convex
optimization programs (see, \eg~\cite{Boyd:2004}).
The aim of encoding an \acs{amn} in SMT or MIP is to
represent constraints like $y = \amn(x)$, with $x$ and $y$ as variables, and
$\amn$ is an arbitrary, possibly non-affine \acs{amn}, in an optimization problem. 
Ultimately, we would like to be able to solve optimization programs in the form
\begin{equation}
	\label{eq:optimization}
	\begin{array}{ll}
		\mbox{minimize}   & \amn_0(x) \\
		\mbox{subject to} 
			& \amn_i(x) \leq 0, \quad i=1,\ldots,m_1, \\
			& \psi_j(x) = 0, \quad j=1,\ldots,m_2,
	\end{array}
\end{equation}
over a variable $x\in\reals^q$,
where $\amn_0,\ldots,\amn_{m_1},\psi_1,\ldots,\psi_{m_2}$ are arbitrary
\acsp{amn}. The idea is made more clear by the graph of an~\acs{amn}.

\begin{definition}[Graph]
The \emph{graph} of a function $f:\reals^q\to\reals^p$ is the set of
input-output pairs $\Graph{f}=\{(x,y)\in\reals^q\times\reals^p\mid y=f(x)\}$.
For a first-order logic formula $\psi[x,y]$ with free variables $x$, $y$, it is
the set of satisfying assignments, $\Graph{\psi} =
\{(c_1,c_2)\in\reals^q\times\reals^p\mid\psi[x:=c_1,y:=c_2]\}$.
\end{definition}

\begin{example}
For the real-valued function $f(x) = x^2$, and the first-order logic
formula 
% over $\reals\times\reals$, 
$\psi[x,y]\equiv\exists z .\; (x\geq z) \wedge (y \geq 0)$,
we have
$\Graph{f} = \Graph{\psi} = \reals\times\reals_+$.
\end{example}

\begin{theorem}[Encoding]\label{thm:encoding}
Given a well-defined \acs{amn} $\amn$,
\[
\Graph{\amn} = \Graph{\smt_\amn[x,y]} = \Graph{\mip_\amn[x,y]}.
\]
\end{theorem}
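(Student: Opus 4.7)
My plan is to prove the two equalities separately: the SMT case by structural induction on the BNF grammar~\eqref{eq:amn_bnf}, and the MIP case by induction along a topological order of the computation graph $G(\varphi)$. Well-definedness is the crucial hypothesis that makes both inductions go through, since it supplies a well-founded order in which internal values can be unambiguously assigned. Throughout, by ``graph'' of a formula with existentially quantified or auxiliary binary variables I mean the projection of its satisfying set onto the $(x,y)$ coordinates.

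For $\Graph{\varphi} = \Graph{\smt_\varphi[x,y]}$, the base case $\varphi(x) = x$ follows directly from~\eqref{eq:smt_enc_var}. For $\varphi = \alpha(\varphi_1)$, equation~\eqref{eq:smt_enc_aff} is satisfied iff there is a witness $v$ with $\smt_{\varphi_1}[x,v]$ and $y = Av+b$; the inductive hypothesis pins $v = \varphi_1(x)$, so $y = \alpha(\varphi_1(x)) = \varphi(x)$. For $\varphi = \mu(\varphi_1,\varphi_2,\varphi_3)$, the inductive hypothesis forces $u = \varphi_1(x)$, $v = \varphi_2(x)$, $w = \varphi_3(x)$ on any satisfying assignment; a case split on whether $w \leq 0$ reduces the two implications in~\eqref{eq:smt_enc_mu} to exactly the two branches of the definition~\eqref{eq:mu}, yielding $y = \varphi(x)$.

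For $\Graph{\varphi} = \Graph{\mip_\varphi[x,y]}$, I would process the vertices of $G(\varphi)$ in topological order. Input constraints $x = v$ anchor the induction. Affine edges give direct equality constraints that trivially match their $\alpha$. For a multiplexer edge, case analysis on the sign of $v_m$ shows that, with $M$ sufficiently large, the binary $b_j$ in~\eqref{eq:mip_enc_mu} must equal $1$ exactly when $v_m \leq 0$; the remaining double inequalities then pin $v_j$ to either $v_k$ or $v_l$ in accordance with~\eqref{eq:mu}. For the inclusion $\Graph{\varphi} \subseteq \Graph{\mip_\varphi[x,y]}$ I would construct the witness explicitly: given $(a,b) \in \Graph{\varphi}$, assign each internal $v_j$ the value of the corresponding subexpression at input $a$, and each $b_j$ the indicator of its enable condition. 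The reverse inclusion falls out of the forced values under the topological induction.

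The main obstacle is the big-$M$ technicality. The encoding~\eqref{eq:mip_enc_mu} is only equivalent to $v_j = \mu(v_k,v_l,v_m)$ when $M$ exceeds the feasible magnitudes of $v_m$ and of the differences $v_j - v_k$, $v_j - v_l$, so on an unbounded input domain no finite $M$ works. I would handle this by either (i) restricting to a bounded input set and exhibiting a sufficient $M$ by propagating bounds through $G(\varphi)$, or (ii) treating $M$ symbolically as a ``sufficiently large'' constant, in keeping with the discussion following~\eqref{eq:mip_enc_mu} and~\cite{Grossmann:2002}. A smaller but important subtlety I would verify is that the strict inequality $-Mb_j < v_m$ in~\eqref{eq:mip_enc_mu} correctly routes the boundary case $v_m = 0$ into the $b_j = 1$ branch, matching the $z \leq 0$ clause in~\eqref{eq:mu}.
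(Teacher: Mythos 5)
Your proposal is correct and is essentially the paper's own argument spelled out in full: the paper disposes of Theorem~\ref{thm:encoding} with the single line ``By construction,'' and your structural induction on~\eqref{eq:amn_bnf} for the SMT encoding plus the topological-order induction on $G(\amn)$ for the MIP encoding is exactly the construction being appealed to. Your attention to the big-$M$ caveat (finite $M$ only suffices on bounded domains, which the paper defers to the cited reference) and to the strict inequality routing $v_m=0$ to the $b_j=1$ branch are the right details to check, and both are handled correctly.
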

\begin{proof}
By construction.
\end{proof}

\paragraph{Querying the solver}
As a consequence of Theorem~\ref{thm:encoding}, we can represent the graph
$\Graph{\amn}$ of an \acs{amn} $\amn$ as a conjunction or disjunction of linear
atoms. We can formulate nonconvex feasibility and optimization problems over
real vector variables, and solve them using an SMT or MIP solver. 

For example, the (nonconvex) feasibility problem 
\begin{equation}
	\label{eq:feasibility}
	\begin{array}{ll}
		\mbox{find}   & (x,y)\in\reals^{q\times p} \\
		\mbox{subject to} & y = \amn(x)
	\end{array}
\end{equation}
has a solution if and only if there exists $(x,y)\in\Graph{\amn}$. 
In other words, the problem~\eqref{eq:feasibility} is equivalent to the problem
\begin{equation*}
	\label{eq:feasibility2}
	\begin{array}{ll}
		\mbox{find}   & (x,y)\in\reals^{q\times p} \\
		\mbox{subject to} & (x,y) \in \Graph{\amn},
	\end{array}
\end{equation*}
which can be found by posing the query
\begin{equation}
	\label{eq:smt_feasibility}
	\exists x \in \reals^q \, . \,
	\exists y \in \reals^p \, . \,
	\smt_\amn[x,y]
\end{equation}
to an SMT solver, or replacing the constraint $y=\amn(x)$ with $\mip_\amn[x,y]$
in a MIP solver. Moreover, the procedure for translating the feasibility
problem~\eqref{eq:feasibility} into the formal
problem~\eqref{eq:smt_feasibility} suitable for an SMT solver using the linear
theory is entirely constructive and mechanical, following the steps outlined in
\S{\ref{sec:encoding}}, and complete in the sense that the
problem~\eqref{eq:feasibility} is feasible if and only
if~\eqref{eq:smt_feasibility} is \textsc{sat}. Conversely, the
problem~\eqref{eq:feasibility} is infeasible if and only if the
query~\eqref{eq:smt_feasibility} is \textsc{unsat}.

\paragraph{Constrained optimization using bisection}
Constrained optimization involving \acsp{amn} can be accomplished by a sequence
of feasibility queries to an SMT or MIP solver. For example, consider the
simplified (nonconvex) optimization problem
\begin{equation}
	\label{eq:optimization2}
	\begin{array}{ll}
		\mbox{minimize}   & \amn_0(x) \\
		\mbox{subject to} & \amn_i(x) \leq 0, \quad i=1,\ldots,m
	\end{array}
\end{equation}
over a variable $x\in\reals^q$, where $\amn_i : \reals^q\to\reals$,
$i=0,\ldots,m$ are \acsp{amn}. The optimal objective value satisfies
$\amn_0^\star \leq t$ if and only if the SMT query
\begin{equation}
	\label{eq:smt_optimization}
	\exists x \in \reals^q \, . \,
	\exists y_0,\ldots,y_m \in \reals \, . \,
	\bigwedge_{i=0}^{m} \smt_{\amn_i}[x,y_i] 
	\wedge
	\bigwedge_{i=1}^{m} (y_i \leq 0)
	\wedge
	(y_0 \leq t)
\end{equation}
is \textsc{sat}. We can minimize the function $\amn_0(x)$ by bisection on $t$
through a sequence of feasibility calls of the
form~\eqref{eq:smt_optimization}, see~\cite[\S{4.2.5}]{Boyd:2004}. If the
initial interval $[l,u]$ contains $\amn^\star_0$, then the number of SMT
calls needed to compute $\amn_0^\star$ to tolerance $\epsilon > 0$ using the
bisection method is at most $\lceil\log_2((u-l)/\epsilon) \rceil$. Moreover, an
$\epsilon$-suboptimal value $x^\star_\epsilon$ with $|\amn_0(x^\star_\epsilon)
- \amn_0^\star| \leq \epsilon$ is obtained directly from the last
query~\eqref{eq:smt_optimization} that returned \textsc{sat}. 

\paragraph{AMNET modeling toolbox}
The bisection procedure is implemented in our open-source modeling package,
\amnet\footnote{\texttt{https://github.com/ipapusha/amnet}}. In addition to
allowing one to define and evaluate \acsp{amn} with the building blocks $\mu$
and $\alpha$, \amnet\ allows one to define new \acsp{amn} by composing existing
\acsp{amn} in a disciplined manner, automatically convert neural networks from
to \acsp{amn}, and solve optimization problems with \acs{amn} objectives and
constraints. See \S\ref{sec:examples} for example applications.

\subsection{Training}
\subsubsection{Gradient descent}
%\paragraph{Weak derivative} 
Finding weights of an AMN to solve a regression or classification problem can
be accomplished with a modified version of the gradient descent algorithm.  In
such problems the goal is to find weights $\theta$ to minimize an objective
$J(\theta)$; example objectives could be least squares or negative
log-likelihood.

For illustration, consider a ``perceptron" network
$\amn^\mathsc{per}_\theta:\reals^n\to\reals$ consisting of a single affine
layer with a multiplexing nonlinearity,
\[
	\amn^\mathsc{per}_\theta(x) = \mu(\alpha(x), \beta(x), \gamma(x)),
	\quad
	\left\{
	\begin{array}{l}
		\alpha(x) = a^T x + b,\\
		\beta(x) = c^T x + d,\\
		\gamma(x) = e^T x + f,
	\end{array}
	\right.
\]
where
$\theta=(a,b,c,d,e,f)\in\reals^n\times\reals\times\cdots\times\reals^n\times\reals$
are the weights parameterizing the network.

To apply gradient descent, it is necessary to compute a descent direction
$\nabla_\theta J(\theta)$, which requires computing the gradient of the
\acs{amn} with respect to its weights. However, the function 
$\amn^\mathsc{per}_\theta(x)$ is not necessarily differentiable in $\theta$. It
is differentiable almost everywhere, except on the set $\{\theta \mid e^Tx + f
= 0 \}$ having measure zero, because the multiplexing nonlinearity $\mu$ is not
necessarily continuous there.
%Still, we can train the network by backpropagating the weak derivative 
%Like in neural networks, we can use gradient descent to train the network. 
However the (weak) derivative of the nonlinearity $\mu$ can be written in terms
of the nonlinearity $\mu$ itself,
\begin{align}
	\nabla_\theta (\amn^\mathsc{per}_\theta(x))
	&= (
		\nabla_a (\amn^\mathsc{per}_\theta(x)),
		\nabla_b (\amn^\mathsc{per}_\theta(x)),
		\ldots,
		\nabla_f (\amn^\mathsc{per}_\theta(x))
		)\nonumber \\
	&= \begin{bmatrix}
		\mu(x,0,\gamma(x))\\
		\mu(1,0,\gamma(x))\\
		\mu(0,x,\gamma(x))\\
		\mu(0,1,\gamma(x))\\
		0\\
		0
	\end{bmatrix}, \label{eq:weak_deriv}
\end{align}
allowing us to define a version of backpropagation where the nondifferentiable 
weights do not change.
\begin{tabbing}
    \rule{\linewidth}{0.4pt}\\
    \textbf{algorithm:} Gradient descent for AMNs\\
	\textbf{given}: an AMN, training objective $J(\theta)$, $k=0$, initial
	$\theta^{(0)}$, learning rates $\alpha_k$, \\
	\quad
		\= tolerance $\epsilon > 0$\\
    \textbf{repeat}: \\
        %\quad
        \> 1. \= $\theta^{(k+1)} := \theta^{(k)} - \alpha_k \nabla_\theta(J(\theta^{(k)}))$\\
		\> 2. \> $k:=k+1$\\
    \textbf{until}: $\|\nabla_\theta(J(\theta^{(k)}))\|\leq \epsilon$
	(or another stopping criterion)\\
    \rule{\linewidth}{0.4pt}
\end{tabbing}
%As in subgradient methods, the stepsize $\alpha_k > 0$ should be summable, but
%not square-summable, \ie, 
%\[
%	\sum_{k=0}^{\infty} \alpha_k = \infty, \quad
%	\sum_{k=0}^{\infty} \alpha_k^2 < \infty.
%\]
%For example, $\alpha_k = 1/k$ would satisfy this rule. For more information
%about step size rules and optimization for neural networks, see~XXX and
%Section~XXX.

Performance modifications to the gradient descent algorithm, including
momentum, batching, and dropout, are ready extensions.
One deficiency of gradient descent stems from the enable parameters ($e$ and
$f$ in $\amn^\mathsc{per}$ above) not changing with training. The network is
stuck with the initial weights and biases parameterizing the enable components
of $\theta$, because those derivatives are set to zero,
see eq.~\eqref{eq:weak_deriv}.  However,
%as seen in the next example, 
an AMN trained by backpropagation in our experiments can still be remarkably
expressive if the stuck parameters are initially well-dispersed. 

In the next section, we will use the SMT encoding to suggest a novel---though
much less efficient---algorithm that trains all AMN parameters at the same time
without stuck enable weights.

%\begin{example}[Training the Triplexer]
%To generate the training set $\mathcal{D}=\{(x^{(k)}, y^{(k)}) \mid
%y^{(k)}=f(x^{(k)}), k=1,\ldots,K\}$ we sample the function $f(x)=\cos(x)$
%uniformly over the interval $[-1,1]$. We train the network by minimizing the
%least squares objective
%\[
%	J(\theta) = 
%	\frac{1}{2}\sum_{(x,y)\in\mathcal{D}}
%	(y - \amn_\theta^\mathsc{tri}(x))^2
%\]
%using weak derivative backpropagation with initial random weights.  The left
%pane of Figure~XXX, show the results obtained from training the network several
%times using weak derivative backpropagation with different initial conditions.
%XXX say something about the knot points (are there 4? or more than 4?).
%\qed
%\end{example}

\subsubsection{SMT embedding in NL theory}
\begin{definition}(Dual of an AMN)
Given an \acs{amn} $\amn_\theta : \reals^q \to \reals^p$, where
$\theta\in\reals^r$ are the parameters, \ie, (stacked) weights and biases of
all the affine units, its dual is a function $\amn^\circ : \reals^r
\to\reals^p$ such that $\amn_x^\circ(\theta) = \amn_\theta(x)$.
\end{definition}

In other words, the dual $\amn^\circ$ is the same as the original
\acs{amn} $\amn$ with the roles of the input variable $x$ and parameters
$\theta$ reversed. An encoding of $\amn^\circ$ can be obtained from the SMT
encoding $\smt_\amn[x,y]$ by adding an existential quantifier for every
component of $\theta$ and assigning $x$.
We are being intentionally agnostic about the stacking and ordering of the
weights and biases of a network $\amn$ into a parameter vector $\theta$,
because there can be many consistent ways to do it. 
%One way to define a
%stacking would be to associate a name or a numeric tag with every node of the
%computation graph $G(\amn)$, 
%We require, however, that the convention followed be such
%that the ``dualization" operation is involutive on the set of \acsp{amn},
%$\amn^{\circ\circ} = \amn$. 
%Note in particular the key property that the dual $\amn^\circ$ is itself an \acs{amn}.

\begin{example}[Dual of perceptron]
The dual of the single-layer ``perceptron" network
\[
	\amn_\theta(x) = \mu(\alpha(x), \beta(x), \gamma(x)),
	\quad
	\left\{
	\begin{array}{l}
		\alpha(x) = a^T x + b,\\
		\beta(x) = c^T x + d,\\
		\gamma(x) = e^T x + f,
	\end{array}
	\right.
\]
where
$\theta=(a,b,c,d,e,f)\in\reals^n\times\reals\times\cdots\times\reals^n\times\reals$
are the weights parameterizing the network, is given by 
\[
	\amn^\circ_x(\theta) = \mu(
		\alpha^\circ_x(\theta), 
		\beta^\circ_x(\theta), 
		\gamma^\circ_x(\theta)),
	\quad
	\left\{
	\begin{array}{l}
		\alpha^\circ_x(\theta) = x^T a + b,\\
		\beta^\circ_x(\theta) = x^T c + d,\\
		\gamma^\circ_x(\theta) = x^T e + f,
	\end{array}
	\right.
\]
where $x$ is the (fixed) parameter and $\theta=(a,b,c,d,e,f)$ is the variable.
\qed	
\end{example}

The dual $\amn^\circ$ in the previous example is itself an AMN, but this need
not be the case in general.  As a result, the relation between the input and
output of $\amn^\circ$ cannot always be encoded in SMT using a linear theory.
For example, by interchanging the roles of the parameters and the input, the
dual of the triplexer (Example~\ref{ex:triplex}) involves products between
existentially quantified variables, \eg, $a_4$ and $w_2$. 

Thus, by following the SMT encoding procedure with the nonlinear theory (NL),
we can still find elements of $\Graph{\amn^\circ}$ in a decidable way. However,
working with the nonlinear theory is much less computationally efficient than
working with the linear theory.

We use the concept of a dual AMN to propose a training procedure for the
weights and biases by encoding them as variables in an SMT query, and using
consistency training.

\paragraph{Consistency training}
Let $\mathcal{D}= \{(x^{(i)}, y^{(i)}\}_{i=1}^{N}$ be a data set of ordered
pairs of training data points. 
Given an \acs{amn} $\amn_\theta$, we say that the network is
$\epsilon$-\emph{consistent} with the data point $(x,y)\in\mathcal{D}$ if
$\|y - \amn_\theta(x)\|_1 \leq \epsilon$. 
%(We use the $1$-norm here, but it can be any norm that is SMT representable:
%for general $p$-norms, all constraints are polynomial equations and
%inequalities.) 
The set of parameters defining all $\epsilon$-consistent networks,
\[
	\Theta_\epsilon = \{\theta \in \reals^r \mid \|y - \amn_\theta(x) \|_1 \leq
	\epsilon \text{ for all } (x,y)\in\mathcal{D}\},
\]
is SMT representable in the nonlinear theory. The encoding of $\Theta_\epsilon$
can be derived by observing that for a fixed $\epsilon \geq 0$, the set
$\Theta_\epsilon$ is nonempty if and only if
\begin{equation}
	\label{eq:cons_training}
	\exists \theta\in\reals^r\, . \bigwedge_{(x,y)\in\mathcal{D}} \|y -
	\amn_x^\circ(\theta)\|_1 \leq \epsilon
\end{equation}
is satisfiable. The expression~\eqref{eq:cons_training} is SMT representable,
because each conjunct is SMT representable.

There are many other ways to define consistency. A weighted sum of norms can be
used to emphasize certain training samples. Each conjunct involves a $1$-norm
here, but it is possible to use, \eg, a weighted norm, a ramp function
(regret), or any other SMT representable function. By restricting to general
$p$-norms, all constraints remain polynomial equations and inequalities.

The consistency viewpoint is useful because we can \emph{train} or
\emph{retrain} a network $\amn_\theta$ by posing the
query~\eqref{eq:cons_training} to an SMT solver. If the result is \textsc{sat},
then the parameter $\theta_0\in\Theta_\epsilon$ obtained from the query defines
a network $\amn_{\theta_0}$ with which every point in $\mathcal{D}$ is
$\epsilon$-consistent. If the result is \textsc{unsat}, then no
assignment to the network parameters can result in an $\epsilon$-consistent
network for the whole data set. In such a case, we can either remove offending
examples from $\mathcal{D}$, increase $\epsilon$, or make the network more
expressive by adding extra architectural layers. A similar data set consistency
idea was used in~\cite{Papusha:2018a} to solve inverse optimal control problems
with regular language specifications.

\paragraph{Robust consistency training}
Suppose we would like to train a network that is robust with respect to, \eg,
bounded (rectangular) perturbations on the input. In the consistency framework,
this might correspond to the query
\begin{equation}
	\label{eq:rob_cons_training}
	\exists \theta\in\reals^r\, . 
	\forall \delta\in[\delta^-,\delta^+]^q\, .
	\bigwedge_{(x,y)\in\mathcal{D}} 
		\|y - \amn_{x+\delta}^\circ(\theta)\|_1 \leq \epsilon.
\end{equation}
The intuition is this: since $(x+\delta)$ and $\theta$ multiply together in the
encoding
of~\eqref{eq:rob_cons_training}, and the query~\eqref{eq:rob_cons_training} is
in \emph{exists-forall} (EF) form, we can synthesize a robust
$\epsilon$-consistent network by a sequence of queries to an SMT solver; see,
\eg~\cite{Cheng:2013}.

%example: decision tree?
%example: approximate $f(x)=\cos(x)$ over $x\in[-1,1]$ using squared-error loss

%%%%%%%%%%%%%%%%%%%%%%%%%%%%%%%%%%%%%%%%%%%%%%%%%%%%%%%%%%%%%%%%%%%%%%%%%%%%%%%%
\clearpage
\section{AMNs in the loop}\label{sec:inloop}
\subsection{Autonomous stability}\label{sec:autonomous_stability}
%\paragraph{Lyapunov function search}
In this section, we are concerned with defining a procedure that formally and
automatically proves properties of the autonomous discrete time nonlinear
system
\begin{equation}
	\label{eq:dtsys}
	x(t+1) = \amn(x(t)), 
	\quad x(0) = x_0,
	\quad t=0,1,2,\ldots,
\end{equation}
where $x(t)\in\reals^n$ is the state at time $t$,
$x_0\in\mathcal{X}_0\subseteq\reals^n$ is the initial condition, $\mathcal{X}_0$ is
the initial set, and $\amn:\reals^n\to\reals^n$ is any well-defined \acs{amn}. 

\begin{example}[Neural networks in the loop]
A known, memoryless state-feedback neural network controller $u(t) =
\nn(x(t))$, $\nn : \reals^{n} \to \reals^{m}$, which uses only piecewise affine
nonlinearities (\eg, ReLU), stabilizes the linear system
\begin{equation}
	x(t+1) = Ax(t) + Bu(t), 
	\quad u(t) = \nn(x(t)), \quad t = 0,1,\ldots,
\end{equation}
if and only if the system~\eqref{eq:dtsys} is stable
with $\amn(x) = Ax + B\cdot\nn(x)$.
The system is illustrated in Figure~\ref{fig:nnio1}.
\qed
\end{example}

\begin{figure}[htpb]
	\centering
	\includegraphics{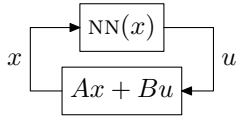}
	\caption{Neural network in the loop.}
	\label{fig:nnio1}
\end{figure}

\begin{example}[Variable-gain control]
The phase-based variable-gain nonlinearity from~\cite[eq.~(4)]{Hunnekens:2016},
\[
	\varphi(e,\dot{e}) = \left\{
	\begin{array}{ll}
		\alpha e, & \text{if } e\dot{e} > 0,\\
		0, & \text{otherwise},
	\end{array}
	\right.
\]
can be written as the network
$\amn^\mathrm{vgc}(e,\dot{e})
=\amn^\vee(\amn^\vee(0,\alpha e, -e, -\dot{e}),\alpha e, e, \dot{e})$, (see
Table~\ref{tab:impl_gates}).  The network satisfies
$\amn^\mathrm{vgc}(e,\dot{e}) = \varphi(e,\dot{e})$ for all $e$, $\dot{e}$.
\qed
\end{example}

\begin{example}[Linear feedback system]
With $u(t) = \nn(x(t)) = K x(t)$, where $K \in \reals^{m\times n}$, the
system~\eqref{eq:dtsys} corresponds to a linear feedback controller. Note that
we can write $\nn(x(t))$ as single-layer neural network with at most $2 m$
internal units and $2 m n$ weights by taking advantage of the identity $x=r(x)
- r(-x)$. The weights of the neural network are $K$ and $-K$.
\qed
\end{example}

\begin{example}[Saturation nonlinearity]
The single-input system~\cite[ex~2.1]{Johansson:2003}
\[
	x(t+1) = Ax(t) + b\cdot\sat(v(t)), 
	\quad v(t)=k^T x(t),
	\quad t = 0,1,\ldots
\]
with saturation, where the saturation function is
\[
	\sat(v) = \left\{
	\begin{array}{ll}
		-1, & v \leq -1\\
		v, & -1 < v < 1\\
		1, & v \geq 1,
	\end{array}
	\right.
\]
can be written as a linear system with neural network feedback.  The saturation
function is an affine combination of ReLU nonlinearities,
$\sat(x)=r(x+1)-r(x-1)-1$.  In this case, the neural network function is given
by $\nn:\reals^n \to \reals$, 
\[
	\nn(x(t)) 
	= \sat(k^T x(t)) 
	= r(k^Tx(t) + 1) - r(k^Tx(t) - 1) - 1.
\]
The controller weights $k\in\reals^m$ are encoded as weights in the neural
network.
\qed
\end{example}

\subsection{Counterexample-guided {L}yapunov search}
To prove autonomous stability for systems like ones in the previous section
\S\ref{sec:autonomous_stability}, we will synthesize a Lyapunov function, which
is positive definite, and decreases along trajectories of the
system~\eqref{eq:dtsys}.
The following procedure searches for a Lyapunov function $V:\reals^n\to\reals$
from a candidate class $\mathcal{V}$ by keeping track of a \emph{counterexample
set} $\mathcal{C}\subseteq\mathcal{X}_0$ of initial points.
Early work in this direction includes \cite{Cheng:2013,Kapinski:2014,Ravanbakhsh:2015}.

\begin{enumerate}
	\item \label{enum:candidate} \emph{Select candidate}: Choose a candidate
	Lyapunov function $V$ from $\mathcal{V}$ that decreases on $\mathcal{C}$,
	\ie, $V$ should satisfy
	\[
		\forall x_0 \in \mathcal{C} .\;
		(V(0) = 0) 
		\wedge (x_0 \neq 0 \rightarrow V(x_0) > 0)
		\wedge (V(\amn(x_0)) - V(x_0) < 0).
	\]
	If such a $V$ cannot be found within $\mathcal{V}$, return \textsc{unknown}.
	\item \label{enum:counterexample} \emph{Generate counterexample}: Attempt
	to find a point $x_c\in\mathcal{X}\setminus\mathcal{C}$ at which the
	candidate $V$ fails to decrease, \ie, pose the query
	\[
		\exists x_c \in\mathcal{X} .\;
		(x_c \neq 0 \wedge V(x_c) \leq 0) 
		\vee
		(V(\amn(x_c)) - V(x_c) \geq 0).
	\]
	\item \label{enum:update} \emph{Update}: If such $x_c$ is found, update
	$\mathcal{C}:=\mathcal{C}\cup\{x_c\}$ and go to step~1. Otherwise, return
	\textsc{stable}.
\end{enumerate}

If the first step fails, we must terminate the procedure, and say nothing about
the asymptotic stability or instability of~\eqref{eq:dtsys}, unless
$\mathcal{V}$ is known to be expressive enough, \eg, polyhedral for certain
$\amn$~\cite{Bitsoris:1988}.
Furthermore, the procedure need not terminate.  The art and science of this
counterexample-guided prescription is in choosing a computationally tractable
$\mathcal{V}$ and update procedures.

For example, let $\mathcal{A}_N$ be the set of all well-defined \acs{amn}s with
at most $N$ multiplexers. If $\mathcal{V}\subseteq\mathcal{A}_N$, and
$\mathcal{X}$ is SMT representable, then Step~\ref{enum:counterexample} is
equivalent to the query
\[
	\begin{aligned}
	& \exists x_c\in\mathcal{X},\,\exists x_c^+ \in\reals^n,\, \exists v_c, v_c^+\in\reals .\;
		\smt_{V}[x_c,v_c] \wedge \big((x_c \neq 0 \wedge v_c \leq 0)\\
	& \quad \vee 
		(\smt_{\amn}[x_c,x_c^+] \wedge \smt_{V}[x_c^+,v_c^+]
		 \wedge v_c^+ - v_c \geq 0)\big),
	\end{aligned}
\]
where we used Theorem~\ref{thm:encoding} to rewrite the constraint
\[
	x_c^+ = \amn(x_c) 
	\Longleftrightarrow 
	(x_c, x_c^+) \in \Graph{\amn}
	\Longleftrightarrow
	\smt_{\amn}[x_c,x_c^+],
\]
and similarly,
\[
	v_c = V(x_c)
	\Longleftrightarrow 
	(x_c, v_c) \in \Graph{V}
	\Longleftrightarrow
	\smt_{V}[x_c,v_c].
\]

\paragraph{General framework}
In general, we can often abstract questions about control systems to searches
for a Lyapunov function satisfying a stability property,
\begin{equation}
	\label{eq:ef_lyapunov}
	\exists V \in \mathcal{V} .\, \forall x\in\mathcal{X} .\;
	\mathsc{lyap}(V,x),
\end{equation}
where $\mathsc{lyap}(V,x)$ is a formula that includes relevant Lyapunov
stability conditions. For example the formula $\mathsc{lyap}$ might be:
\begin{itemize}
	\item \emph{Global stability}: $\mathcal{X}=\reals^n$
	\[
		\mathsc{lyap}(V,x)
		\equiv
		(V(0) = 0)
		\wedge (x \neq 0 \rightarrow V(x) > 0) 
		\wedge (V(x^+) - V(x) < 0)
	\]
	\item \emph{Region of attraction}: $\mathcal{X}\subseteq\reals^n$
	\[
		\mathsc{lyap}(V,x)
		\equiv
		(V(0) = 0)
		\wedge (x \neq 0 \rightarrow V(x) > 0) 
		\wedge (V(x^+) - V(x) < 0)
	\]
	\item \emph{Decay rate}:
	\[
		\mathsc{lyap}(V,x)
		\equiv
		(V(0) = 0)
		\wedge (x \neq 0 \rightarrow V(x) > 0) 
		\wedge (V(x^+) - \gamma V(x) \leq 0)
	\]
	\item \emph{Positively invariant set}:
	\[
		\mathsc{lyap}(V,x)
		\equiv
		(V(0) = 0)
		\wedge (x \neq 0 \rightarrow V(x) > 0) 
		\wedge (V(x) \leq 0 \rightarrow V(x^+) \leq 0)
	\]
	%XXX We can represent these sets as AMN-sublevels, \eg, 
	%$S_\alpha = \{ x \mid \amn(x) \leq \alpha \}$.
\end{itemize}

The problem~\eqref{eq:ef_lyapunov} is in exists-forall form, and can be tackled
by the general counterexample-guided procedure below, provided that the
following subprocedures are tractable:
\begin{align*}
	\mathsc{E-solve}(\mathcal{X}_c)
	&\equiv \exists V \in \mathcal{V} .\, \bigwedge_{x_c \in \mathcal{X}_c}
		\mathsc{lyap}(V,x_c)\\
	\mathsc{F-solve}(V)
	&\equiv \exists x \in \mathcal{X} .\, \neg \mathsc{lyap}(V,x)\\
\end{align*}
\begin{tabbing}
    \rule{\linewidth}{0.4pt}\\
    \textbf{algorithm:} Counterexample-guided Lyapunov search\\
    \textbf{initialize}: $k:=0$, $x_0 \in \mathcal{X}$, $\mathcal{X}_0:=\{x_0\}$\\
    \textbf{repeat}: \\
        \quad
        \= 1. \= \emph{Search for a candidate Lyapunov function}.\\
		\>\>\quad\=\textbf{if} $\mathsc{E-solve}(\mathcal{X}_k)$ \textbf{then}: \\
		\>\>\>\quad\= $V_k:=$ solution to $\mathsc{E-solve}(\mathcal{X}_k)$\\
		\>\>\>\textbf{else} \textbf{return} $\mathsc{false/unknown}$\\
		\> 2. \> \emph{Generate counterexample}. \\
		\>\>\>\textbf{if} $\mathsc{F-solve}(V_k)$ \textbf{then}: \\
		\>\>\>\> $x_{k+1}:=$ solution to $\mathsc{F-solve}(V_k)$\\
		\>\>\>\textbf{else} \textbf{return} $\mathsc{true}$\\
		\> 3. \> \emph{Update counterexample set}.\\
		\>\>\>$\mathcal{X}_{k+1}:= \mathcal{X}_k \cup \{x_{k+1}\}$\\
		\>\>\>$k:=k+1$\\
    \textbf{until}: stopping criterion\\
    \rule{\linewidth}{0.4pt}
\end{tabbing}

\section{Extended examples}\label{sec:examples}

%%%%%%%%%%%%%%%%%%%%%%%%%%%%%%%%%%%%%%%%%%%%%%%%%%%%%%%%%%%%%%%%%%%%%%%%%%%%%%%%
\subsection{Verifying a region of attraction}
For an initial application of our counterexample-guided Lyapunov function synthesis
procedure, we consider the linear dynamical system
$x(t+1)=Ax(t)$, where the $2\times 2$ matrix 
\[
	A = 
%	\begin{bmatrix}
%		0.70045415 & -0.26380309 \\
%		-0.22775951 & -0.46268814
%	\end{bmatrix}
	\begin{bmatrix}
		0.7005 & -0.2638 \\
		-0.2278 & -0.4627
	\end{bmatrix}
\]
is globally (Schur) stable, with spectral radius $\rho(A)=0.75$. The origin of
the dynamical system is globally exponentially stable.

We take $\mathcal{V}$ to be the class of max-of-affine Lyapunov functions. In
general this class (or the class of \acs{amn}-representable)
Lyapunov function candidates may not be large enough to prove global
exponential stability, even of linear dynamical systems. So instead, we verify
local stability in the box
$\mathcal{B}=\{x\in\reals^2 \mid \|x\|_\infty \leq 10\}$. In other
words, we verify the weaker claim that the set $\mathcal{B}$ is a region of
attraction for the equilibrium at the origin.

Our algorithm terminates with the piecewise affine Lyapunov function
\begin{equation*}
\begin{aligned}
	V(x) &= \max\{
		x_2, 
		-0.1612x_1 -0.1020x_2,
		 0.4614x_1 +0.0155x_2,\\
		&-0.4212x_1 +0.1433x_2,
		-0.5156x_1 +0.0796x_2,
		 0.5036x_1 -0.1632x_2
	\}
\end{aligned}
\end{equation*}
using automatically generated counterexamples that were all constrained to
$\mathcal{B}$, see Figure~\ref{fig:lyap}. Although the function $V(x)$ is a
Lyapunov function by construction, we can illustrate that it decreases along
trajectories of the dynamical system $x(t+1)=Ax(t)$ by simulating the system at
several initial conditions within $\mathcal{B}$, and tracking the value of
$V(x(t))$, see Figure~\ref{fig:trajectories}.

\begin{figure}[htpb]
    \centering
    \begin{subfigure}[b]{0.49\linewidth}
        \centering
		% orig size: 0 -1 461 317 (left, lower, right, upper)
		\includegraphics[trim=75 0 50 0,clip,width=\textwidth]{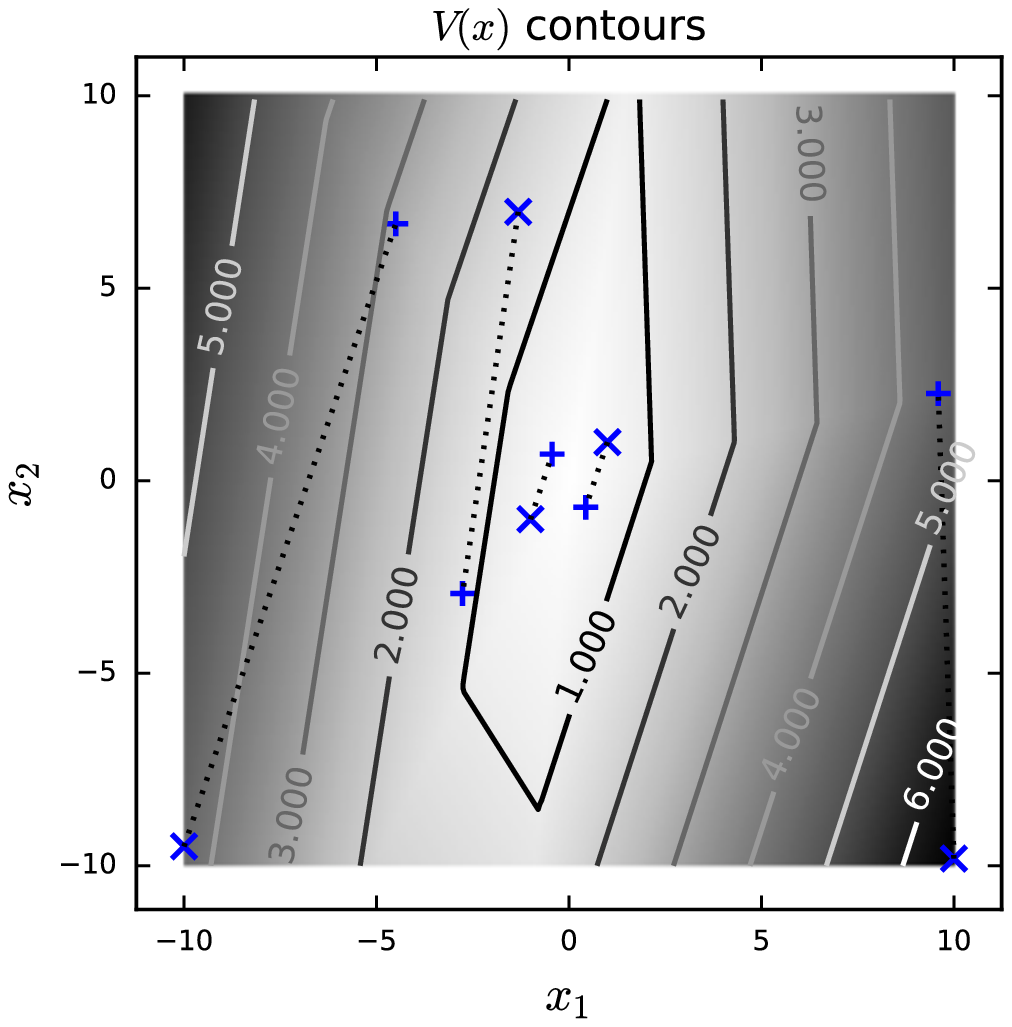}
        \caption{Contours of $V(x)$}
		\label{fig:lyap_contours}
    \end{subfigure}
    \begin{subfigure}[b]{0.49\linewidth}
        \centering
		\includegraphics[trim=90 0 50 0,clip,width=\textwidth]{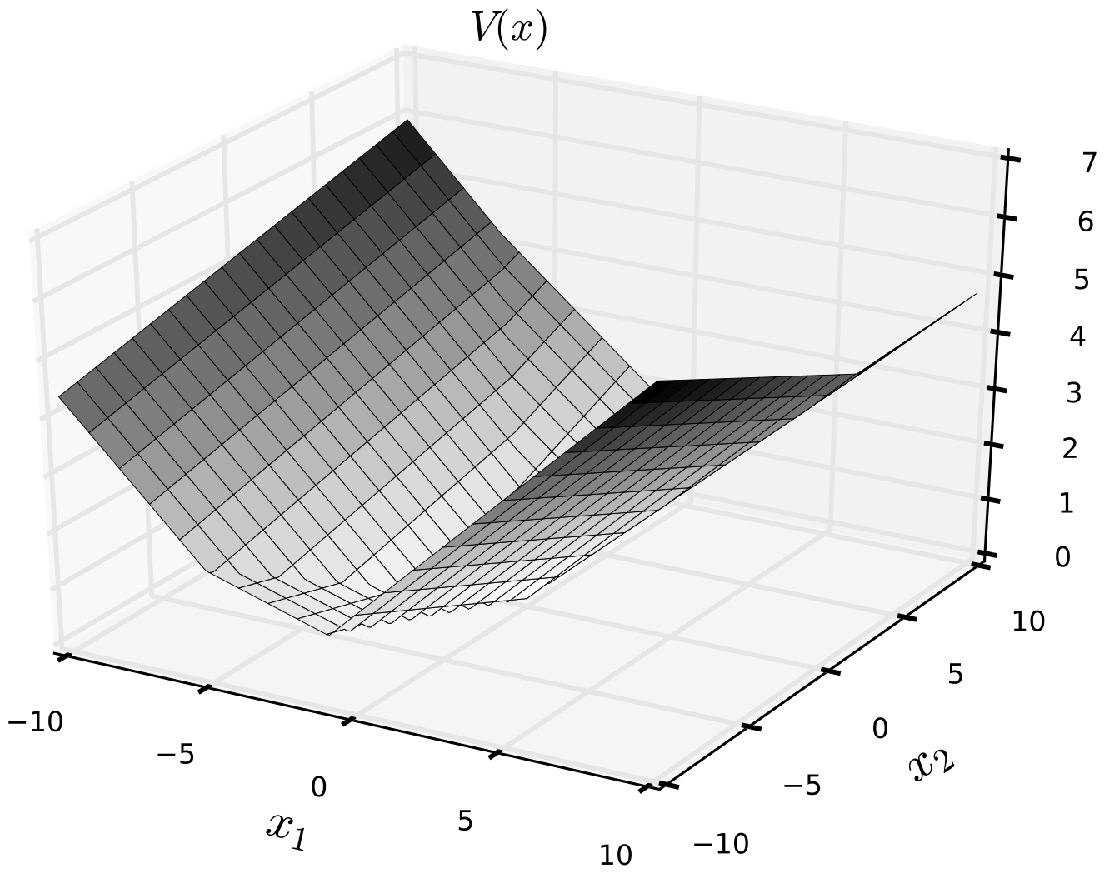}
        \caption{3D visualization of $V(x)$}
		\label{fig:lyap_3d}
    \end{subfigure}
	\caption{Contours (left, solid lines) and 3D visualization (right) of a
	candidate piecewise affine function, synthesized to decrease from the
	initial conditions (counterexamples) marked by a cross, $\times$, to their
	state-space locations marked by a plus, $+$, at the next time step. The
	counterexample conditions were generated automatically using our algorithm.
	The resulting function $V(x)$ certifies asymptotic stability of the origin
	for any initial condition in the box $\mathcal{B}=\{x\in\reals^2 \mid
	\|x\|_\infty \leq 10\}$.}
    \label{fig:lyap}
    %\vspace{-5ex}
\end{figure}

\begin{figure}[htpb]
    \centering
    \includegraphics[trim=10 10 20 20,clip,width=0.75\textwidth]{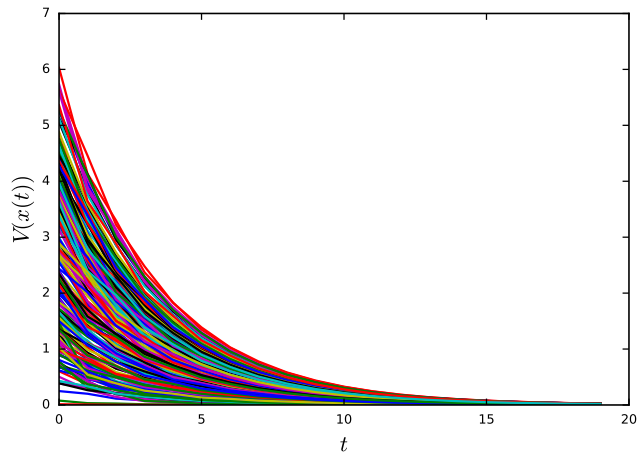}
	\caption{Lyapunov function value for trajectories starting at 200 random
	points in the box $\mathcal{B}=\{x\in\reals^2 \mid \|x\|_\infty \leq 10\}$.}
    \label{fig:trajectories}
\end{figure}

%%%%%%%%%%%%%%%%%%%%%%%%%%%%%%%%%%%%%%%%%%%%%%%%%%%%%%%%%%%%%%%%%%%%%%%%%%%%%%%%
\subsection{Verification of $\lambda$-contractive dynamical systems}
In this example, we verify the $\lambda$-contractiveness of a given closed loop
dynamical system, following the example in~\cite{Milani:1999}.
\begin{definition}[$\lambda$-contractive]
	Let $G$ be a matrix in $\reals^{m\times n}$ and $w$ a vector in $\reals^m$. 
	Define the polyhedron $S(G,w) = \left\lbrace x \mid Gx \preceq w \right\rbrace
	$. The set $S(G,w)$ is \emph{$\lambda$-contractive} with respect to the
	system~\eqref{eq:dtsys} 
	if there is a real $0<\lambda<1$ such that $x(t+1) \in
	S(G,w\epsilon\lambda)$ for all $0< \epsilon \leq 1$ and all $x(t)\in
	S(G,w\epsilon)$, \ie
	\begin{equation*}
		\Phi: \quad 
		\forall x .\, 
		\forall \epsilon .\,
		[(0<\epsilon\leq 1) \wedge (x\in S(G,w\epsilon))]
		\rightarrow (\amn(x)\in S(G,w\lambda \epsilon)).
	\end{equation*}
\end{definition}

Consider the closed loop system with state-feedback control
represented by the state equations
\begin{equation}
	\begin{aligned}
		x(t+1) &= Ax(t) + Bu(t), \\
		u(t) &= \text{sat}(Fx(t)),
	\end{aligned}
	\label{eq:SysEq} 
\end{equation}
where $x(t) \in \reals^n$, $u \in \reals$, $A \in \reals^{n\times n}$, $B \in
\reals^{n}$, and
\begin{align*}
\text{sat}(Fx) &= \begin{cases} -u^\text{min} &\text{if} ~ Fx < -u^\text{min}, 
\\Fx &\text{if} ~ -u^\text{min} \leq Fx \leq u^\text{max},
\\u^\text{max} &\text{if} ~ Fx > u^\text{max}.
\end{cases}
\end{align*}
In other words, the autonomous system has $\amn(x(t)) = Ax(t) + B\sat(Fx(t))$.

Given $G\in \reals^{r\times n}$ and $w \succ 0 \in \reals^r$, we can test the
$\lambda$-contractiveness of $S(G,w)$ by defining two AMN functions $V_1,V_2
: \reals^n \rightarrow \reals$, whose zero-sublevel sets represent 
the region inside the polyhedron $S(G,w)$.  We rearrange the inequality
$Gx\preceq w$, where $g_i$ is the $i$-th row of $G$, to give
\begin{align*}
	V_1(x) &= \max_i (g_ix - \epsilon w_i),\\
	V_2(\amn(x)) &= \max_i (g_i \amn(x) - \epsilon \lambda w_i).
\end{align*}

The negative of the condition $\Phi$ is
\begin{align*}
\label{eq:SatSpec} \neg \Phi &= \exists x,\epsilon.\, 
	x\in S(G,w\epsilon)\wedge 0<\epsilon\leq 1
	\wedge \neg(\amn(x) \in S(G,w\epsilon \lambda) ) \\
	&= \exists x,\epsilon.\,  (V_1(x)\leq 0) \wedge (V_2(x) > 0 ) \wedge (0< \epsilon \leq 1)
\end{align*}
If there exist $x$ and $\epsilon$ that satisfy $\neg\Phi$, then the
polyhedron $S(G,w)$ is not $\lambda$-contractive with respect to
system~\eqref{eq:SysEq}. However, if $\neg \Phi$ is \textsc{unsat}, then we can
say that the polyhedron $S(G,w)$ is $\lambda$-contractive within the region of
nonlinear behavior of system~\eqref{eq:SysEq}.

The example polyhedron $S(G,w)$ in~\cite{Milani:1999} can be verified as
$\lambda$-contractive:
\[
	G = \begin{bmatrix}
	0.2888 & -1.8350 \\
	0.9650 & -2.0576 \\
	1.0008 & 1.7891 \\
	1.5951 & -1.9866 \\
	2.0707 & -2.0590 \\
	-1.4970 & -1.5864 \\
	-0.2888 & 1.8350 \\
	-0.9650 & 2.0576 \\
	-1.0008 & -1.7891 \\
	- 1.5951 & 1.9866 \\
	1.4970 & 2.0590 \\
	- 2.0707 & 1.5864
	\end{bmatrix},
	\quad
	w = \begin{bmatrix}
	35.4375 \\
	48.2116 \\
	48.1152 \\
	62.5184 \\
	62.3934 \\
	76.2996 \\ 
	35.4375 \\
	48.2116 \\
	48.1152 \\
	62.5184 \\
	62.3934 \\
	76.2996 
	\end{bmatrix}.
\]
However, if we scale $w$ by $\delta = 1.01$, then we can find a counterexample,
\[
	x(t) = \begin{bmatrix} 
	38.9278177 \\ 2.27698913
	\end{bmatrix},
	\quad
	x(t+1) = \begin{bmatrix} 32.28074873 \\ -5.83874012	\end{bmatrix},
	\quad
	\epsilon = 0.99914198.
\]

The reference~\cite{Milani:1999} provides a set of independent linear programs,
a solution of which would synthesize the polyhedron $S(G,w)$, which is
$\lambda$-contractive with respect to closed loop system.  With AMNs, we have
an alternate, fully automatic method to validate piecewise affine Lyapunov
functions for stability analysis of LTI discrete time systems with saturated
closed loop control inputs.

%%%%%%%%%%%%%%%%%%%%%%%%%%%%%%%%%%%%%%%%%%%%%%%%%%%%%%%%%%%%%%%%%%%%%%%%%%%%%%%%%%%%%%%%%%%%%%%%%%%
\subsection{Nonconvex optimal control}
This example analyzes finite horizon optimal control problems with affine (both
convex and nonconvex) control constraints.  Consider the piecewise affine
discrete linear system
\[
	x(t+1) = Ax(t) + Bu(t),
\]
where
\[
	A = 
	\begin{bmatrix} 
			1 & \delta t \\ 
			0 & 1 
	\end{bmatrix},
	\quad
	B = 
	\begin{bmatrix}
		\frac{\delta t^2}{2m} \\
		\frac{\delta t}{m} 
	\end{bmatrix}, 
\]
and initial and goal states
\[
	\begin{bmatrix} 
		x_1(0)\\
		x_2(0)
	\end{bmatrix} = 
	\begin{bmatrix} 
		0\\
		0
	\end{bmatrix},
	\quad
	\begin{bmatrix} 
		x_1(N)\\
		x_2(N)
	\end{bmatrix} = 
	\begin{bmatrix} 
		1\\
		0
	\end{bmatrix}.
\]
The control magnitude is bounded above and below by
\[
	0.2/T^\text{tot} \leq \|u(t)\|_1 \leq 1/T^\text{tot}, \quad 
	T^\text{tot}=7.5\text{s}.
\]
Note that the upper bound is a convex constraint, while the lower bound is not.
The goal is to reach the goal state while minimizing the objective
\[
	J = \sum_{t=0}^{N-1} \|u(t)\|_1.
\]
We solve the (nonconvex) optimization problem
\[
	\begin{array}{ll}
		\mbox{minimize}   & \sum_{t=0}^{N-1} \|u(t)\|_1 \\
		\mbox{subject to} 
			& x(t+1) = A x(t) + Bu(t), \quad t=0,\ldots,N-1\\
			%& 0.2 \leq \|u(t)\| \leq 1, \quad t=0,\ldots,N\\
			& 0.2/T^\text{tot} \leq \|u(t)\| \leq 1/T^\text{tot}, \quad t=0,\ldots,N\\
			& x(0) = (0, 0), \quad x(N) = (1, 0).
	\end{array}
\]
with \amnet. Figure~\ref{fig:noncvx_control} summarizes the results. Note that
the lower bounds are enforced for the nonconvex optimization problem, leading
to a control schedule that obeys the lower bound on the control magnitude by
never coasting with $u(t)=0$; such a control schedule is not obvious from the
optimal schedule for the convex problem, but it is provably optimal.
%\begin{figure}[h!]
%	\centering
%	\begin{subfigure}{0.45\textwidth}
%		%\includegraphics[width = \textwidth]{fig/1D-CVX.svg}
%		\includegraphics[width=\textwidth]{fig/1D-CVX}
%		\caption{Convex (\textsc{cvxpy})}
%	\end{subfigure}~
%	\begin{subfigure}{0.45\textwidth}
%		%\includegraphics[width = \textwidth]{fig/1D-AMN.svg}
%		\includegraphics[width=\textwidth]{fig/1D-AMN}
%		\caption{Combined (\amnet)}
%	\end{subfigure}
%	\caption{Optimal control trajectories for the convex problem (upper bound
%	only) using \textsc{cvxpy}~\cite{cvxpy} (left) and  combined (upper and
%	lower bounds) using \amnet\ (right).} 
%	\label{fig:noncvx_control}
%\end{figure}

\begin{figure}[tbp]
	\centering
	\begin{subfigure}{0.95\textwidth}
		\includegraphics[width=\textwidth]{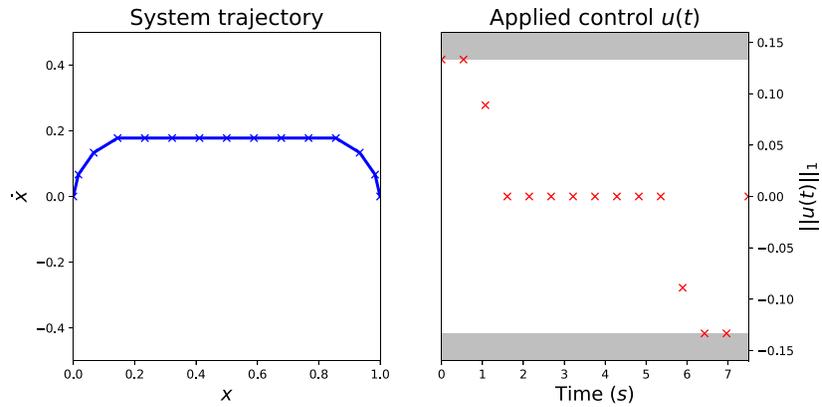}
		\caption{Convex (\textsc{cvxpy})}
		\label{fig:noncvx_control_a}
	\end{subfigure}\\
	\begin{subfigure}{0.95\textwidth}
		\includegraphics[width=\textwidth]{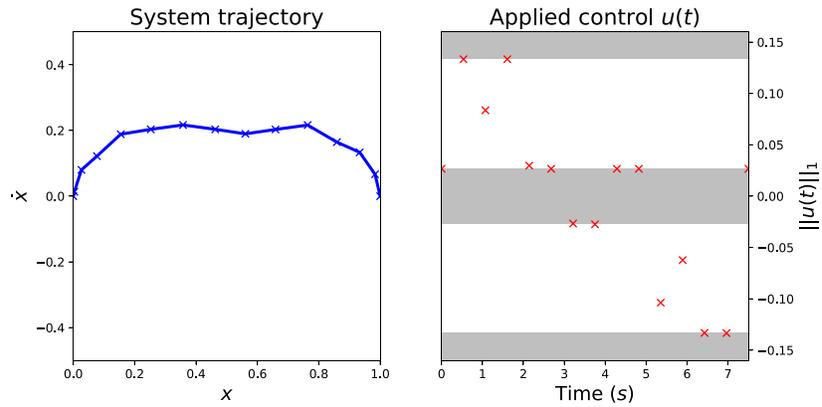}
		\caption{Combined (\amnet)}
		\label{fig:noncvx_control_b}
	\end{subfigure}
	\caption{Illustration of nonconvex optimization capabilities of \amnet.
	Optimal control trajectories for the convex problem (upper control bound
	only) using \textsc{cvxpy}~\cite{cvxpy} 
	and  combined (upper and lower control bounds) using \amnet.
	Shading indicates disallowed control inputs.} 
	\label{fig:noncvx_control}
\end{figure}

\clearpage

\subsection{Characterizing classifier robustness}
To illustrate the use of \acsp{amn} in other neural network applications, we
used \textsc{TensorFlow} to train a simple and small neural network classifier
on the popular MNIST handwritten digit
dataset~\cite{TensorFlow:2015,LeCun:1998}.  To make conversion and verification
of the corresponding \acs{amn}
manageable, the dimension of the training data was first reduced from 784 
($28\times28$ handwritten digit images) to 40 using PCA whitening.
The resulting classifier was automatically converted to an \acs{amn}, and its
associated input-output relationship encoded as SMT constraints with our
accompanying \textsc{Amnet} modeling
toolbox.
The baseline, 20-unit hidden layer \acs{amn} 
$\amn^\mathsc{mnist}:\reals^{40}\to\reals^{10}$,
with ReLU nonlinearities between the
layers, achieved a classification rate of 
0.8736
on the test data set.

\begin{figure}[htbp]
    \centering
    \includegraphics{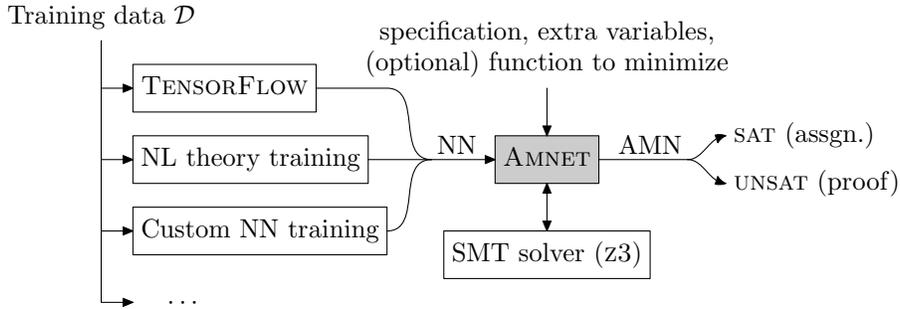}
    \caption{Using \textsc{Amnet} to convert a NN to an~\acs{amn}.}
    \label{fig:amnet1}
\end{figure}

The procedure to convert a NN with piecewise affine nonlinearities to an
\acs{amn} is entirely mechanical (\S{}\ref{sec:encoding}), and importantly,
indifferent to the specific optimization algorithm (\eg, gradient descent,
batching, regularization) used to train the original classifier. Therefore,
once the weights and biases of the NN are encoded in SMT, formal properties
of the original NN can be readily verified using our toolbox, see
Figure~\ref{fig:amnet1}.

A perturbation $\varepsilon = (\varepsilon_1, \ldots,
\varepsilon_5, 0, \ldots, 0) \in \reals^{40}$ was created to act on the 5 most significant
components of the dimensionality reduced input, $X_\mathrm{amn} =
X_\mathrm{pca} + \varepsilon$.
Each dimension of the perturbation was constrained to $-3 \leq \varepsilon_i
\leq 3$.  
The output layer of $\amn$ was constrained to produce a misclassification of
`5' by introducing the equality constraint
\[
\max(\amn^\mathsc{mnist}(X_\mathrm{amn})) =
\amn^\mathsc{mnist}(X_\mathrm{amn})_6.
\]
on the final classification layer.
The \textsc{z3} SMT solver was used to find a solution to these constraints,
shown below.  The resulting perturbed image was recovered by $X^T =
V^{T}X_\mathrm{amn}$.

\begin{figure}[htbp]
	\begin{subfigure}{.30\textwidth}
		\centering
		\includegraphics[width=\textwidth]{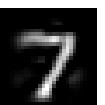}
		\caption{Original image; classifies as `7'.}
		\label{fig:HOG_resolution}
	\end{subfigure}
	~
	\begin{subfigure}{.30\textwidth}
		\centering
		\includegraphics[width=\textwidth]{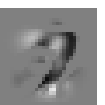}
		\caption{Perturbation visualization.}
		\label{fig:block_overlap}
	\end{subfigure}
	~
	\begin{subfigure}{.30\textwidth}
		\centering
		\includegraphics[width=\textwidth]{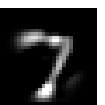}
		\caption{Perturbed image; classifies as `5'.}
		\label{fig:Peturbed_image}
	\end{subfigure}
	\caption{Perturbation on MNIST images.}
	\label{fig:MNIST_result}
\end{figure}

\section{Conclusion and extensions}\label{sec:extensions}
In this paper, we introduced the concept of an affine multiplexing network
(AMN), which is a relation built up using two fundamental building blocks: a
multiplexing function ($\mu$), and an affine transformation ($\alpha$). By
restricting to these two building blocks, it is possible to formally encode the
relation in linear arithmetic, and therefore operate on it with existing SMT
and MIP solvers. We applied this framework to nonlinear controller
verification, and synthesis of stability proofs of neural networks in the loop.
We also introduced the software package \amnet\ to make modeling with AMNs
simple.  Many extensions are possible, including the following subjects of
future work:
\begin{itemize}
	%\item \textbf{Multiple selection}: just like in electronics, $4$-to-$1$
	%$n$-to-$1$ mux, equality and inequality mux conditions.
	\item \textbf{Conic enable condition} Instead of $z \leq 0$, we can take
	$z\in\mathcal{K}$ to be an arbitrary cone. The multiplexer nonlinearity
	becomes 
	\[
		\mu_\mathcal{K}(x,y,z)
		=
		\left\{
		\begin{array}{ll}
			x, & \text{if } z \in \mathcal{K},\\
			y, & \text{otherwise}.
		\end{array}
		\right.
	\]
	The standard nonlinearity is the same as $\mu(x,y,z) =
	\mu_{-\reals}(x,y,z)$. The verification problem translates to
	conic existential conditions, which can be tractably computed with a
	convex modification to the DPLL algorithm in SMT solvers.
	%SMT-$\mathcal{K}$ or MIP-$\mathcal{K}$. 
	%Should this be a conic-existential condition?
	%\item \textbf{Local nonlinearization control}: We can use nonlinear control
	%to beat fundamental performance bounds of linear controllers by local
	%``nonlinearization." First, we design a linear controller for a linear
	%system, and use it as an initial point of an AMN controller. Then we find a
	%Lyapunov function that guarantees stability and desired margins. Then
	%locally optimize a performance objective by gradient descent, subject to
	%the Lyapunov function obtained for the linear control system still being
	%valid for the closed loop system. The intuition is: if we were using linear
	%controllers, getting better performance would typically take away from
	%robustness. By adding nonlinear degrees of freedom and the Lyapunov
	%constraint, we can do no worse in both, provided the Lyapunov constraint
	%incorporates all the performance and robustness objectives.
	%\item \textbf{Derivative-free nonlinearization}: for many useful
	%controllers, the number of AMN weights is $\simeq100-1000$. In this regime,
	%the derivatives can be computed numerically. This means an objective can be
	%simulation-based.
	%\item GAN generative adversarial networks as an alternative to
	%SMT~\cite{Goodfellow:2014}
	%\item \textbf{Simulating max-plus algebra}?
	\item \textbf{Simulating simple programs} Many programs can be written as if-then-else networks. In
	general, as long as a closed loop system can be written as
	$x(t+1)=\amn(x(t))$, then AMNs can be used to model them. 
	%However, the worst-case computational complexity is bad. 
	%XXX talk about worst-case computational complexity.
	\item \textbf{Continuous time dynamics} Dynamics governed by differential equations
    $\dot{x} = \amn(x)$ can be treated just as well as discrete time dynamics governed 
    by difference equations $x(t+1)=\amn(x(t))$, under regularity assumptions the 
    function $\amn$, which guarantee existence and uniqueness of solutions.
	\item \textbf{Path planning and Model Predictive Control} Since we can represent constraints
	$x(t+1) = \amn(x(t))$, $t=0,\ldots, N-1$  over a time horizon $N$, we can
	do optimal path planning for any (potentially nonlinear) system, including
	switched systems. This can help us answer questions about whether a
	particular policy for a small system (\eg, a switched power converter) is
	optimal or only near optimal, and to come up with novel solutions for
	nonlinear systems. 
	%Though the computation is probably not there yet, we can also pose MPC
	%problems for difficult schedules with dynamics.
	\item \textbf{Vision in the loop}
	As long as every component of a closed loop system is (or can be modeled
	as) an AMN, formal verification of vision in the loop systems can in
	principle be attempted. See Figure~\ref{fig:nnvis1}.
		\begin{figure}[htpb]
			\centering
			\includegraphics{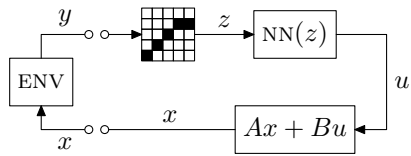}
			\caption{Vision system in the loop with AMN environment model
			\textsc{env}.}
			\label{fig:nnvis1}
		\end{figure}
	\item \textbf{Convolutional linear units} Convolutional units are simply
	affine units with a special case Toeplitz structure, which admit
	specialized algorithms. We can take advantage of special structure forming
	convolution equality constraints like $y = c * x$, where $c$ is the
	convolution kernel.
	%that can handle equality constraints like $y=\ttbox{dft}(x)$, where
	%$\ttbox{dft}(x) = D x$, and $D$ is a DFT matrix.
\end{itemize}

%%%%%%%%%%%%%%%%%%%%%%%%%%%%%%%%%%%%%%%%%%%%%%%%%%%%%%%%%%%%%%%%%%%%%%%%%%%%%%%%
\section{Acknowledgments}
We wish to thank R. Dimitrova, M. Ahmadi, and H. Poonawala for insightful
discussions, and to acknowledge the grants 
AFRL FA8650-15-C-2546 and
AFRL UTC 17-S8401-10-C1.
This work was completed while the first author was an Institute for
Computational Engineering and Sciences (ICES) fellow at the University of Texas
at Austin.

% Rayna Dimitrova
% Mohamadreza Ahmadi
% Steven Carr
% Niklas Lauffer
% Hasan Poonawala

%%%%%%%%%%%%%%%%%%%%%%%%%%%%%%%%%%%%%%%%%%%%%%%%%%%%%%%%%%%%%%%%%%%%%%%%%%%%%%%%
%\appendix
%\input appendices

%%%%%%%%%%%%%%%%%%%%%%%%%%%%%%%%%%%%%%%%%%%%%%%%%%%%%%%%%%%%%%%%%%%%%%%%%%%%%%%%
\nocite{DeMoura:2008}  % main z3 paper
\nocite{Vavasis:2010,Nemirovskii:1993} % classic papers on NP hard problems
\nocite{Blondel:2001}  % Stability of Saturated Linear Dynamical Systems Is Undecidable
\nocite{Kapinski:2014} % SMT framework for Lyapunov+hybrid systems
\nocite{Ravanbakhsh:2015}	% SMT framrwork for switched system CLF synthesis,
							% with good explanation of CEGIS
\nocite{Hassibi:1998} % piecewise-linear systems with quadratic Lyapunov
                      % functions: ideas for extension to synthesis
\nocite{Goncalves:2001} % Quadratic surface Lyapunov functions in global
						% stability analysis of saturation systems; cites
						% Johansson & Rantzer, Hassibi & Boyd in Example 4.3
						% an exmaple where analysis using piecewise quadratic
						% Lyapunov functions fails to analyze the system
\nocite{Kroening:2016} % main book on decision procedures
% three motivating papers
\nocite{Huang:2017}   % Original paper by Kwiatkowska's group
	                  % uses an SMT encoding, does not care about NNs in the loop
\nocite{Cheng:2017}   % Similar problem, uses a MIP encoding, but I don't think
                      % they care about NNs in the loop
\nocite{Katz:2017}    % Reluplex, more efficient for ReLU NNs
\nocite{Pulina:2010}  % earliest work on verification of safety properties for
                      % NNs (according to Lindsey Kuper)
\nocite{Bastani:2016} % paper with technique that He Zhu & Stephen Magill used
                      % for their pong example
% fundamental limits
\nocite{Hunnekens:2016}
\nocite{Foerster:2017} % Input Switched Affine Networks (for interpretability)
\nocite{Sontag:1992}   % nonlinear control systems can be stabilized using two
                       % hidden layers, but not in general using just one
% applications
\nocite{Iandola:2016}  % SqueezeNet, a DNN targeted for the object
% classification problem that achieves the same accuracy as the popular DNN
% AlexNet but with a 50x reduction in the number of model parameters
\nocite{Giesl:2010}    % piecewise affine Lyapunov functions do not always
					   % exist for certain triangulations, but they do if you
					   % take the sqrt of quadratic Lyapunov function, and then
					   % piecewise-linear approximate it.
\nocite{Zhu:2017}      % Galois report with Pong dynamics
\nocite{Dvijotham:2018, Bunel:2017} % Deepmind reports
\nocite{Dutta:2017}    % Susmit's report, MILP for output range analysis

\bibliographystyle{alinit}
\bibliography{amn}
%%%%%%%%%%%%%%%%%%%%%%%%%%%%%%%%%%%%%%%%%%%%%%%%%%%%%%%%%%%%%%%%%%%%%%%%%%%%%%%%
\end{document}